\newtheorem{theorem}{Theorem}
\newtheorem{proposition}[theorem]{Proposition}
\newtheorem{lemma}[theorem]{Lemma}
\theoremstyle{definition}
\newtheorem{definition}[theorem]{Definition}
\theoremstyle{theorem}
\newtheorem*{theorem*}{Theorem}
\newtheorem*{mainResult*}{Main Result} 
\newtheorem*{mainResultL*}{Main Result}
\newtheorem*{corollary*}{Corollary}
\newtheorem*{remark*}{Remark}
\theoremstyle{definition}
\newtheorem*{definition*}{Definition}
\begin{document}

\title{Exponential stable manifold for the synchronized state of the abstract mean field system}

\author{W. Oukil\\
\small\text{Faculty of Mathematics.}\\
\small\text{University of Science and Technology Houari Boumediene.}\\
\small\text{ BP 32 EL ALIA 16111 Bab Ezzouar, Algiers, Algeria.}}

\date{\today}

\maketitle

\begin{abstract}
  This paper investigates the exponential stability of abstract mean field systems in their synchronized state. We analyze stability by studying the linearized system and demonstrate the existence of an exponentially stable invariant manifold. Our focus is on the equilibrium stability under synchronization. We provide a comprehensive analysis of both linear and nonlinear cases of the system. Additionally, we prove the existence of stable limit cycles and establish a relation between the dynamics in linear and nonlinear frameworks.
\end{abstract}
\begin{keywords}
Mean field theory,  differential equation, exponential stability, periodic system,   limit cycles. 
\end{keywords}\\
\begin{AMS}
34D05, 37B65, 34C15. 35L04.
\end{AMS}

%%%%%%%%%%%%%%%%%%%%%%%%%%%%%%%%%%%%%%%%%%%%%%%%%%%%%%%%%%

%%%%%%%%%%%%%%%%%%%%%%%%%%%%%%%%%%%%%%%%%%%%%%%%%%

%%%%%%%%%%%%%%%%%%%%%%%%%%%%%%%%%%%%%%%%%%%%%%%%%%%%%%%%%%

%%%%%%%%%%%%%%%%%%%%%%%%%%%%%%%%%%%%%%%%%%%%%%%%%%
  \section{Introduction and Main results}
  This article presents an in-depth analysis of the stability and dynamics of coupled mean-field systems at the synchronized state. These systems, which   interact  in various scientific fields such as physics, biology, and social sciences   \cite{Min1, Min2, Min3} , exhibit complex behaviors due to their interactions. We explore the mathematical frameworks that govern these systems, focusing on the conditions under which stability can be achieved.   This research contributes to the growing body of literature on mean-field theory, offering both theoretical information and practical implications for understanding collective behavior in complex systems.
  
We study in this article the exponential stability on the mean field systems as the Winfree and Kuramoto mode in  \cite{Marco,Seung-Yeal, HaParkRyoo, kuramoto1,doi:10.1080/14689367.2018.1547683, WinfreeModel}   in its synchronized state.  In 1967 Winfree \cite{WinfreeModel}   proposed a mean field model  describing the synchronization of a population of organisms or {\it oscillators} that interact simultaneously \cite{articleariartnam}.   
 
Our main result consists of two parts: {\it The linear part}, where we will study the stability of a class of perturbed linear systems by decomposing the  fundamental matrix.  {\it The non-linear part},  where we study the exponential stability of th coupled mean-field systems. In other words, we shows the existence of a stable limit cycle. 

\subsection{  Linear part: Assumptions and results}
Consider the following notation: Let be $(n,m)\in\mathbb{N}^*\times\mathbb{N}^*$ and
\[A(t):=\{a_{i,j}(t)\in \mathbb{R},\ 1 \le i \le n,\ 1 \le j \le m,\ t \in I\subseteq \mathbb{R}\},\]
a $n\times m$ matrix-valued function, we denote the sup-norm of $A(t)$ as
\[||A||=\max\{\sup_{t\in I} |a_{ij}(t)|,1 \le i \le n,\ 1 \le j \le m\},\]
and we denote by $I_N$ the square identity matrix of order $N$.
\subsubsection{Stability Assumption $(H_{stab})$}
\label{abriviatHypSta}
Let   $\zeta(t)$ be a square matrix-valued function of  order $N$ that depends on time $t \in \mathbb{R}$ satisfying $||\zeta||<+\infty$. We consider the following perturbed linear system:
\begin{equation}\label{Chap1li}
\dot{Y}(t)=[b(t)I_{N}+\mathcal{A}(t)+\zeta(t)]Y(t),\quad t\geq t',
\end{equation}
where  $t'\in\mathbb{R}$ and  $\mathcal{A}(t)=\{a_{i,j}(t)\}_{1\le i,j\le N}$ is a continuous square matrix-valued function of order $N$ with rank 1, and $b :\mathbb{R}\to\mathbb{R}$ a continuous scalar function.
We consider the following assumption about $b(t)$ and $\mathcal{A}$ that we call {\it{ the stability assumption}}
\[
(H_{stab})\quad \left\{
    \begin{array}{ll}\text{ $a_{i,j}(t)=a_{j}(t)$ for all $1 \le i,j\le N$,} \\
\\
                             \text{$b :\mathbb{R}\to\mathbb{R}$, $a_j : \mathbb{R}\to \mathbb{R}$, $j=1,\ldots,N$,}\\
  \text{are a continuous  $1$-periodic functions},\\
\\
                     	\int_{0}^{1}b(s)+\sum_{j=1}^{N}a_j(s)ds=0,\quad\text{and}\quad-\alpha:=\int_{0}^{1}b(s)ds<0.
    \end{array}
\right.
\]
Define the {\it fundamental matrix} of a linear system  in the following sense
\begin{definition}
Let $t\mapsto {A}(t)$ be a continuous square matrix-valued function of order $N$. The {\it fundamental matrix} of the linear system
\[
\dot{y}=A(t)y,\quad t\in \mathbb{R},
\]
 is the matrix-valued function $R(s;t)$ with $s,t\in \mathbb{R}$ that satisfies
\begin{itemize}
\item  $\forall t_0 \in\mathbb{R}$, \ $R(t_0;t_0)=I_N$, 
\item  $\forall t \in\mathbb{R}$, \ $\frac{d}{dt}R(t;t_0) =A(t) R(t;t_0)$.
\end{itemize}
\end{definition}
To gain further insights into the behavior of solutions of the linear system \eqref{Chap1li}, we will introduce a class of matrices $\zeta$ referred to as "normalizing matrices," defined as follows 
\begin{definition}\label{normalizingproof}
We say that the matrix $\zeta$ is a {\it{normalizing}} matrix if the system \eqref{Chap1li} has a solution $V(t) = (v_1(t), \ldots, v_N(t))$ such that
\[
\inf_{t\in\mathbb{R}}||V(t)|| > 0, \quad \text{and} \quad \sup_{t\in\mathbb{R}}||V(t)|| < +\infty.
\]
We call $V(t)$ a {\it{normalizing}} solution of \eqref{Chap1li} associated with the matrix $\zeta$.
\end{definition}

\subsubsection{Linear Result  }\label{Chap1SectionHypothesisandmainresultslinear}
In the following two linear results, we will consider two cases: $\zeta$ being a normalizing matrix or arbitrary.
\begin{mainResultL*}[{{I}}$^l$]
Consider the system \eqref{Chap1li} with fundamental matrix $R(s;t)$. Suppose that $b$ and $\mathcal{A}$ satisfy the stability assumption ($H_{stab}$). Let $\beta\in(0,\alpha)$, then there exist $K > 0$ and $D_* > 0$ such that for any normalizing matrix $\zeta$ satisfying $||\zeta|| < D_*$ and for any $t\in \mathbb{R}$, there exists a linear form $\mathcal{L}_{t} : \mathbb{R}^N\to\mathbb{R}$ such that for any $Y\in \mathbb{R}^N$ and for any $s\geq t$, we have
\begin{description}
\item[$\bullet$] $\mathcal{L}_{s}(R(s;t)V(t))=1$ and $\mathcal{L}_{t}(Y)<K ||Y||$,
\item[$\bullet$] $\mathcal{L}_{t}(Y) =\mathcal{L}_{s}(R(s;t)Y)$,
\item[$\bullet$] $||R(s;t) [Y-\mathcal{L}_{t}(Y)V(t)]||<K ||Y||\exp(-\beta (s-t))$,
\end{description}
where $V(t)$ is a normalizing solution of \eqref{Chap1li} associated with the matrix $\zeta$. Moreover, the operator ${R}(s;t)$ admits the decomposition
\[
R(s;t)Y=\mathcal{L}_{t}(Y)V(s)+R(s;t) [Y-\mathcal{L}_{t}(Y)V(t)]  ,
\]
In other words, the submanifold $\mathcal{W}_{stab}:=\{Z\in\mathbb{R}^N, \mathcal{L}_t(Z)=0\}$ of dimension $N-1$ is exponentially stable.
\end{mainResultL*}

\begin{mainResultL*}[{{II}}$^l$][General Case]
Consider the system \eqref{Chap1li} with fundamental matrix $R(s;t)$. Suppose that $b$ and $\mathcal{A}$ satisfy the stability assumption ($H_{stab}$). Let $\beta\in(0,\alpha)$, then there exist $K > 0$ and $D_* > 0$ such that for any matrix $\zeta$ satisfying $||\zeta|| < D_*$ and for any $t\in\mathbb{R}$, there exists a linear form $\psi_{t} : \mathbb{R}^N\to\mathbb{R}$ such that for any $Y\in \mathbb{R}^N$ and for any $s\geq t$, we have
\[
||R(s;t) [Y-\psi_{t}(Y)\mathbbm{1}]||<K ||Y||\exp(-\beta (s-t)),
\]
where $\mathbbm{1}:=(1,\ldots,1)\in \mathbb{R}^N$. Moreover, the fundamental matrix ${R}(s;t)$ admits the decomposition
\[
R(s;t)Y=\psi_{t}(Y)R(s;t)\mathbbm{1}+R(s;t) [Y-\psi_{t}(Y)\mathbbm{1}].
\]
In other words, the submanifold $\mathcal{W}_{stab}:=\{Z\in\mathbb{R}^N, \psi_t(Z)=0\}$ of dimension $N-1$ is exponentially stable.
Furthermore, if there exists a solution that does not exponentially decay to zero, then
\begin{description}
\item[$\bullet$] $\psi_{t}(Y) =\psi_{s}(R(s;t)Y)$.
\end{description}
\end{mainResultL*}

\subsection{ Non-linear Part: Assumptions and results}\label{Chap2hypothesisnonlinearsynch}
The class of mean-field coupled systems we will study is given by the following two systems: the \textit{non-perturbed periodic system}
\begin{equation}\label{Chap1NotPerturbedSystem} \tag{NP}
\dot{x}_i=F(X,x_i),\quad i=1,\ldots,N,\quad t\geq t_0,
\end{equation}
and the \textit{perturbed system}
\begin{equation}\label{Chap1SystemGeneral} \tag{P}
\dot{x}_i=F(X,x_i)+H_i(X),\quad i=1,\ldots,N,\quad t \geq t_0,
\end{equation}
where $N\geq2$ and $X=(x_1,\ldots,x_N)$. The functions $F : \mathbb{R}^N \times \mathbb{R} \to \mathbb{R}$ and $H=(H_1,\ldots,H_N) : \mathbb{R}^N \to \mathbb{R}^N$ are $C^1$ functions.

\subsubsection{Notations and Definitions}
For $q,p \in \mathbb{N}^*$, let $g := (g_1, \ldots, g_p) : \mathbb{R}^q \to \mathbb{R}^p$ be a function. Consider the following semi-norm on the vector space of continuous functions from $\mathbb{R}^q$ to $\mathbb{R}^p$:
\[
||g||_{B} = \sup_{y \in B} \max_{1 \le i \le p} |g_i(y)|,
\]
where $B = \{y = (y_1, \ldots, y_q) \in \mathbb{R}^q : \max |y_i - y_j| \le 1\}$. This semi-norm is a norm on the vector space of continuous functions from $B$ to $\mathbb{R}^p$. We denote $d^i g$, $i = 1,2,\ldots$, as the $i^{\text{th}}$ derivative of $g$. We define
\[
||dg||_{B} = \max_{\substack{1 \le i \le p \\ 1 \le j \le q}} ||\partial_j g_i||_B, \quad ||d^2g||_{B} = \max_{\substack{1 \le i \le p \\ 1 \le j,k \le q}} ||\partial_k \partial_j g_i||_B.
\]
In the following, we will consider functions defined on $\mathbb{R}^q$ initially, with $q = N$, and then restrict them to the set $B$.

Let $g : \mathbb{R}^N \times \mathbb{R} \to \mathbb{R}$, $y = (y_1, \ldots, y_N) \in \mathbb{R}^N$, and $z \in \mathbb{R}$. We define
\begin{equation*}
\partial_i g(y,z) = \left\{
\begin{aligned}
&\frac{\partial}{\partial z} g(y,z), \quad i = N+1\\
&\frac{\partial}{\partial y_i} g(y,z), \quad i = 1,\ldots,N.
\end{aligned}
\right.
\end{equation*}
For $q,p \in \mathbb{N}^*$, a function $g : \mathbb{R}^q \to \mathbb{R}^p$ is called $\mathbbm{1}$-\textit{periodic} according to the following definition:
\begin{definition}\label{diagonalperiodicity}[$\mathbbm{1}$-periodicity]
Let $g : \mathbb{R}^q \to \mathbb{R}^p$ be a function, and denote $\mathbbm{1} := (1, \ldots, 1) \in \mathbb{R}^q$. The function $g$ is called $\mathbbm{1}$-\textit{periodic} if
\[
{g}(y + \mathbbm{1}) = {g}(y), \quad \forall y \in \mathbb{R}^q.
\]
\end{definition}

Notice that this definition does not imply that the function $g$ is periodic with respect to each variable individually.

Denote $\Phi^t$ as the flow of the system \eqref{Chap1SystemGeneral} (including \eqref{Chap1NotPerturbedSystem}). We now define the  positively $\Phi^t$-invariant set as follows:
\begin{definition}
Suppose that the flow $\Phi^{t}$ of the system \eqref{Chap1SystemGeneral} exists for all $t \geq t_{0}$. We say that an open set $C \subset \mathbb{R}^N$ is \textit{positively invariant} under the flow $\Phi^{t}$, or $C$ is positively $\Phi^t$-invariant, if $\Phi^{t}(C) \subset C$ for all $t \geq t_{0}$.
\end{definition}

\subsubsection{Synchronization Hypotheses $(H)$ and $(H_*)$}
We will consider the following hypotheses $(H)$ and $(H_*)$ about the field of the system \eqref{Chap1SystemGeneral} (including \eqref{Chap1NotPerturbedSystem}), which are sufficient for obtaining synchronization and stability.

\begin{align*}
(H) &\quad \left\{
    \begin{array}{ll}
    F \text{ is of class } C^2, \text{ and } \max\{||F||_{B}, ||dF||_{B}, ||d^2F||_{B}\} < +\infty,\\
    F \text{ is } \mathbbm{1}\text{-periodic and } \min_{s \in [0,1]} F(s\mathbbm{1},s) > 0,	   
    \end{array}
\right.\\
(H_*) &\quad \int_{0}^{1} \frac{\partial_{N+1} F(s\mathbbm{1},s)}{F(s\mathbbm{1},s)} \, ds < 0.
\end{align*}
 The hypothesis $(H_*)$ is known as the "synchronization hypothesis" when $(H)$ is verified. Hypothesis $(H)$ implies that the function $F(s\mathbbm{1},s)$ is uniformly Lipschitz. Specifically, in \cite{2017arXiv170307692O, doi:10.1080/14689367.2016.1227303}, the following Theorems {{I}} and {{II}} are stated. The first theorem shows the existence of a synchronization state, while the second shows a periodic locking state.

\begin{theorem*}[{{I}}]
Consider the system \eqref{Chap1SystemGeneral}. Suppose that $F$ satisfies hypotheses $(H)$ and $(H_*)$. Then, there exists $D_* > 0$ such that for all $D \in (0, D_*]$, there exists $r > 0$ and an open set $C_{r}$ of the form
\[
C_{r} := \Big\{ X = (x_i)_{i=1}^N \in \mathbb{R}^N \,:\, \exists \nu \in \mathbb{R}, \quad \max_{i} |x_i - \nu| < \Delta_{r}(\nu) \Big\},
\]
where $\Delta_{r} : \mathbb{R} \to (0, D]$ is a $C^1$ and $\mathbbm{1}$-periodic function, such that for any $H$ of class $C^1$ satisfying $||H||_{B} < r$, the following holds:

\begin{enumerate}
\item \textbf{Existence of Solution}: The flow $\Phi^t$ of the system \eqref{Chap1SystemGeneral} exists for all $X \in C_{r}$ and $t \geq t_{0}$.

\item \textbf{Synchronization}: The open set $C_{r}$ is positively invariant under $\Phi^t$. Furthermore, for all $X \in C_{r}$ we have $\min_{1 \le i \le N} \inf_{t \geq t_0} \frac{d}{dt} \Phi^t_i(X) > 0$ and
\begin{gather*}
 |\Phi^t_{i}(X) - \Phi^t_{j}(X)| < 2D,\ \forall 1 \le i, j \le N,\ \forall t \geq t_0.
\end{gather*} 
\end{enumerate}
\end{theorem*}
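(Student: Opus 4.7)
The plan is to build the positively invariant tube $C_r$ around the diagonal trajectory $\nu\mathbb{1}$ by designing the width function $\Delta_r$ from the linearized transverse dynamics, and then to verify invariance by a boundary (\emph{maximum coordinate}) argument once $D$ and $r$ are chosen small enough that the nonlinear and perturbative corrections are dominated by the linear contraction.

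First, because $F(s\mathbb{1},s)>0$ on $[0,1]$ and $F$ is $\mathbb{1}$-periodic along the diagonal, the scalar ODE $\dot\nu=F(\nu\mathbb{1},\nu)$ generates a monotone flow with $\nu(t+T)=\nu(t)+1$ for $T:=\int_0^1 ds/F(s\mathbb{1},s)$. I linearize system (NP) around this synchronized trajectory: with $y_i:=x_i-\nu$, to first order $\dot y_i=b(\nu)y_i+\sum_j a_j(\nu)y_j$ where $b(\nu)=\partial_{N+1}F(\nu\mathbb{1},\nu)$ and $a_j(\nu)=\partial_j F(\nu\mathbb{1},\nu)$. A time change with $d\tau=F(\nu\mathbb{1},\nu)dt$ renders the coefficients $1$-periodic, and hypothesis $(H_*)$ yields $\int_0^1 b(\tau)d\tau<0$; the conservation $\int_0^1[b+\sum_j a_j]d\tau=0$ follows from differentiating the identity $F((s+h)\mathbb{1},s+h)=F(s\mathbb{1},s)$ at $h=0$. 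This is precisely the setup of $(H_{\mathrm{stab}})$, so Linear Result (II$^l$) provides an $(N-1)$-dimensional exponentially stable submanifold transverse to $\mathbb{1}$.

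Second, I would choose $\Delta_r$ as a strictly positive $C^1$ $\mathbb{1}$-periodic solution of an inequality of the form $\Delta_r'(\nu)F(\nu\mathbb{1},\nu) > b(\nu)\Delta_r(\nu) + \gamma$, renormalized so that $\Delta_r$ takes values in $(0,D]$. The existence of such $\Delta_r$ follows from the strictly negative period average $-\alpha=\int_0^1 b\,d\tau$: the homogeneous equation admits exponentially decaying periodic envelopes, and a small perturbation gives the strict inequality with a small $\gamma>0$. Fix $X\in\partial C_r$ and choose $i$ with $x_i-\nu=\Delta_r(\nu)$ (a maximizing index). Taylor-expanding,
\[
F(X,x_i)-F(\nu\mathbb{1},\nu)=b(\nu)(x_i-\nu)+\sum_j a_j(\nu)(x_j-\nu)+O(D^2),
\]
and using $|x_j-\nu|\le \Delta_r(\nu)\le D$ together with the sign of $a_j$ absorbed into the inequality, I obtain
\[
\dot x_i-\dot\nu-\Delta_r'(\nu)\dot\nu<-\tfrac{\gamma}{2}+O(D^2)+\|H\|_B<0,
\]
provided $D\le D_*$ and $\|H\|_B<r$ are small enough. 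A symmetric argument at a minimizing index gives the lower boundary, so $C_r$ is positively invariant. Monotonicity $\dot\Phi^t_i(X)>0$ comes from $F(s\mathbb{1},s)>0$ and continuity once deviations are constrained by $2D$, which also implies that the flow does not escape in finite time, hence exists for all $t\ge t_0$.

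The principal obstacle is the simultaneous tuning of $D$, $r$, and $\gamma$: the width function $\Delta_r$ must be constructed with enough slack to absorb the $O(D^2)$ Taylor remainder of $F$ and the uniform perturbation $\|H\|_B<r$, while remaining strictly positive and $\mathbb{1}$-periodic. This is where the quantitative content of $(H_{\mathrm{stab}})$ and Linear Result (II$^l$) enters: the exponential rate $\beta\in(0,\alpha)$ produces the margin $\gamma$, and this margin dictates the admissible thresholds $D_*$ and $r$. Everything else (smoothness and $\mathbb{1}$-periodicity of $\Delta_r$, form of $C_r$, continuation of the flow) is a routine consequence once the boundary inequality holds.
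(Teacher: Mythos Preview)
The paper does not prove Theorem~(I); it is quoted from the author's earlier work \cite{2017arXiv170307692O,doi:10.1080/14689367.2016.1227303} and used as a black box for the stability analysis here. So there is no in-paper proof to compare against directly, but your sketch can still be checked on its own terms, and it has a concrete gap.

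With the reference you chose, $\dot\nu=F(\nu\mathbb{1},\nu)$, the Taylor expansion at a maximizing index $i$ produces the cross term $\sum_j a_j(\nu)(x_j-\nu)$. Since each $|x_j-\nu|\le\Delta_r(\nu)\le D$, this term is of size $\sum_j|a_j(\nu)|\,\Delta_r(\nu)=O(D)$, the \emph{same} order as the leading linear term $b(\nu)\Delta_r(\nu)$, not $O(D^2)$. Your phrase ``the sign of $a_j$ absorbed into the inequality'' hides the real requirement: to dominate this term you would need a positive $1$-periodic $\Delta_r$ satisfying $\Delta_r'\,F>\bigl[b+\sum_j|a_j|\bigr]\Delta_r+\gamma$, but nothing in $(H)$ or $(H_*)$ forces $\int_0^1\bigl(b+\sum_j|a_j|\bigr)/F<0$ (you only know $\int b/F<0$ and $\int(b+\sum_j a_j)/F=0$), so such a $\Delta_r$ need not exist and the boundary inequality fails in general.

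The remedy, which is what the present paper builds on (see Definition~\ref{SPN} and the proof of Lemma~\ref{lemmefinalstabilitynonlinear}), is to use the moving reference $\mu_X$ solving $\dot\mu_X=F(\Phi^t(X),\mu_X)$ instead of the diagonal $\nu$. Then
\[
\dot x_i-\dot\mu_X=F(X,x_i)-F(X,\mu_X)+H_i(X)=\partial_{N+1}F(X,\mu_X)(x_i-\mu_X)+O\bigl((x_i-\mu_X)^2\bigr)+H_i(X),
\]
and the cross-coupling disappears because only the last slot of $F$ differs. Now the coefficient is $\partial_{N+1}F(X,\mu_X)=b(\mu_X)+O(D)$, so the error is genuinely $O(D^2)$ and your scalar periodic-width construction (essentially Lemma~\ref{Chap3dispersionstabilite}) goes through. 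Incidentally, the appeal to Linear Result~(II$^l$) in your first paragraph is not needed: the construction of $C_r$ is a one-dimensional comparison argument, and the $(N-1)$-dimensional stable subspace plays no role there.
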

\begin{theorem*}[{{II}}]
Consider the system \eqref{Chap1SystemGeneral}. Suppose that $F$ satisfies hypotheses $(H)$ and $(H_*)$. Then, there exists $D_* > 0$ such that for all $D \in (0, D_*]$, there exists $r > 0$ such that for any function $H$ of class $C^1$ and $\mathbbm{1}$-periodic function satisfying $||H||_{B} < r$, there exists an open set $C_{r}$ (defined as in Theorem {{I}}) and an initial condition $X_* \in C_{r}$ such that
\[
\Phi^t_i(X_*) = \rho t + \Psi_{i,X_*}(t), \quad \forall i=1,\ldots,N, \ \forall t \geq t_0,
\]
where $\rho > 0$ and $\Psi = (\Psi_{1}, \ldots, \Psi_{N})$ with $\Psi_{i} : \mathbb{R} \to \mathbb{R}$ are $C^1$ and $\frac{1}{\rho}$-periodic functions.
\end{theorem*}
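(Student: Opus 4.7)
The plan is to realize $X_{\ast}$ as a fixed point of a Poincar\'e-type return map obtained by quotienting the flow by its $\mathbb{1}$-translation symmetry. Invoking Theorem~{{I}} for $D\in(0,D_{\ast}]$, one obtains the positively invariant open set $C_{r}$ on which $\frac{d}{dt}\Phi^{t}_{i}(X)>0$ uniformly and $|\Phi^{t}_{i}(X)-\Phi^{t}_{j}(X)|<2D$ for all $t\geq t_{0}$. The key structural observation is that, because both $F$ and $H$ are $\mathbb{1}$-periodic, the exact translation symmetry $\Phi^{t}(X+\mathbb{1})=\Phi^{t}(X)+\mathbb{1}$ holds (by substitution into the ODE and uniqueness of solutions), and $C_{r}$ is itself invariant under $X\mapsto X+\mathbb{1}$ since $\Delta_{r}$ is $\mathbb{1}$-periodic.

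Next, I would introduce the affine cross-section $\Sigma_{c}:=\{X\in\mathbb{R}^{N}:\tfrac{1}{N}\sum_{i=1}^{N}x_{i}=c\}$, which is transverse to the flow because $\frac{d}{dt}\sum_{i}\Phi^{t}_{i}(X)>0$. Let $T(X)>0$ denote the first hitting time of $\Sigma_{c+1}$ from $X\in\Sigma_{c}\cap C_{r}$; it is finite and depends continuously on $X$ thanks to the uniform positive lower and upper bounds on the velocities. Define the Poincar\'e map $P(X):=\Phi^{T(X)}(X)\in\Sigma_{c+1}\cap C_{r}$ and its reduction $\tilde{P}(X):=P(X)-\mathbb{1}$; by the translation symmetry, $\tilde{P}$ maps $\Sigma_{c}\cap C_{r}$ into itself, and any fixed point $X_{\ast}$ satisfies $\Phi^{T(X_{\ast})}(X_{\ast})=X_{\ast}+\mathbb{1}$. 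Setting $\rho:=1/T(X_{\ast})$ and $\Psi_{i,X_{\ast}}(t):=\Phi^{t}_{i}(X_{\ast})-\rho t$, the translation invariance yields
\[
\Psi_{i,X_{\ast}}(t+1/\rho)=\Phi^{t}_{i}(X_{\ast}+\mathbb{1})-\rho t-1=\Psi_{i,X_{\ast}}(t),
\]
which is the desired periodic-locking decomposition with $\rho>0$ since $T(X_{\ast})<\infty$.

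To produce the fixed point I would shrink $D$ so that $2D<\min_{\nu\in\mathbb{R}}\Delta_{r}(\nu)$ (positive because $\Delta_{r}$ is continuous, $\mathbb{1}$-periodic and strictly positive, hence bounded below on the compact $[0,1]$), and consider the compact convex slice $K_{c}:=\{X\in\Sigma_{c}:\max_{i}|x_{i}-c|\leq 2D\}\subset\Sigma_{c}\cap C_{r}$. The synchronization estimate combined with $\mathrm{mean}(P(X))=c+1$ gives $\max_{i}|(P(X))_{i}-(c+1)|<2D$, so $\tilde{P}(K_{c})\subset K_{c}$, and Brouwer's fixed point theorem applied to the continuous self-map $\tilde{P}:K_{c}\to K_{c}$ delivers $X_{\ast}$. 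The main obstacle is precisely this \emph{self-map property}, which requires the careful quantitative comparison between the synchronization radius $2D$ of Theorem~{{I}} and the minimum tube width $\min_{\nu}\Delta_{r}(\nu)$ recalled above; a cleaner alternative, yielding also uniqueness of $X_{\ast}$, is to upgrade Brouwer to a contraction argument by applying Main Result~{{II}}$^{l}$ to the variational equation of \eqref{Chap1SystemGeneral} along a candidate synchronized orbit, whose Jacobian decomposes as $b(t)I_{N}+\mathcal{A}(t)+\zeta(t)$ and satisfies $(H_{stab})$ as a consequence of $(H_{\ast})$.
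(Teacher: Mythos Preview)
The paper does not itself prove Theorem~{{II}}: it is quoted from \cite{2017arXiv170307692O, doi:10.1080/14689367.2016.1227303} as a known result and then used as an input for Main Results~{{I}} and~{{II}}. So there is no in-paper argument to compare against, and I assess your proposal on its own terms.

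Your Poincar\'e--return-map strategy via the $\mathbb{1}$-translation symmetry is the right route, and the deduction of the locked form $\Phi^{t}_{i}(X_{\ast})=\rho t+\Psi_{i}(t)$ from a fixed point of $\tilde P$ is correct. The gap is exactly where you flag it, but your proposed resolution is not viable: you write ``shrink $D$ so that $2D<\min_{\nu}\Delta_{r}(\nu)$'', yet Theorem~{{I}} stipulates $\Delta_{r}:\mathbb{R}\to(0,D]$, hence $\min_{\nu}\Delta_{r}(\nu)\le D<2D$ for \emph{every} admissible $D$, and the desired inequality can never hold. Consequently the slice $K_{c}=\{X\in\Sigma_{c}:\max_{i}|x_{i}-c|\le 2D\}$ is never contained in $C_{r}$, and $\tilde P$ is not even defined on all of $K_{c}$. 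The correct domain for Brouwer is not an externally chosen cube but the closure of $C_{r}\cap\Sigma_{c}$ itself: positive $\Phi^{t}$-invariance of $C_{r}$, together with the $\mathbb{1}$-invariance you already noted, makes $\tilde P$ a continuous self-map of that closure, and one then has to argue that this closed slice is homeomorphic to a disk, which in the cited references comes from the explicit tube description of $C_{r}$. Your contraction alternative via the linearized stability result is a legitimate way to bypass the topological point and also delivers uniqueness of $X_{\ast}$; just be aware that it would require the analysis of Section~\ref{Chap3GeneralLineaireStability} as input, turning Theorem~{{II}} into a consequence of the present paper rather than a cited prerequisite.
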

\subsubsection{Main Results}
The following result {{I}} shows the stability of the system \eqref{Chap1SystemGeneral}.
\begin{mainResult*}[{{I}}]
Consider the system \eqref{Chap1SystemGeneral}. Suppose that $F$ satisfies hypotheses $(H)$ and $(H_*)$. Then, there exists $D_* > 0$ such that for all $D \in (0, D_*]$, there exists $r > 0$, and there exists an open set $C_{r}$ (as defined in Theorem {{I}}), such that for any function $H$ of class $C^1$ satisfying $\max\{||H||_{B}, ||dH||_{B}\} < r$, we have
\begin{align*}
&\exists M > 0, \ \forall X \in C_{r}, \exists \delta > 0, \forall Y \in C_{r}, \ ||X - Y|| < \delta :\\
&||\Phi^{t}(X) - \Phi^{t}(Y)|| < M ||X - Y||, \ \forall t \geq t_0,
\end{align*}
Furthermore, for all $X \in C_{r}$, there exists a submanifold $\mathcal{W}_X \subset C_{r}$ of dimension $N-1$ at $X$ such that
\begin{align*}
&\exists \beta > 0, \ \exists K > 0, \ \forall X \in C_{r}, \ \forall Y \in \mathcal{W}_X: \\
&||\Phi^{t}(X) - \Phi^{t}(Y)|| < K \exp(-\beta (t - t_{0})) ||X - Y||, \quad \forall t \geq t_{0}.
\end{align*}
\end{mainResult*}
A consequence of Theorem {{II}} and Main Result {{I}} is the following result.
\begin{mainResult*}[{II}]
Consider the system \eqref{Chap1SystemGeneral}. Suppose that $F$ satisfies hypotheses $(H)$ and $(H_*)$. Then, there exists $D_* > 0$ such that for all $D \in (0, D_*]$, there exists $r > 0$, and there exists an open set $C_{r}$ (as defined in Theorem {{I}}) such that for any function $H$ of class $C^1$ and $\mathbbm{1}$-periodic satisfying $\max\{||H||_{B}, ||dH||_{B}\} < r$, there exists a submanifold $\mathcal{W}_{stab} \subset C_{r}$ of dimension $N-1$ such that
\begin{gather*}
\exists \beta > 0, \ \exists K > 0, \ \forall X \in \mathcal{W}_{stab}: \\
  ||\Phi^{t}(X) - \rho t - \Psi(t)|| < K \exp(-\beta (t - t_0)), \ \forall t \geq t_0,
\end{gather*}
where $\rho > 0$ and $\Psi = (\Psi_{1}, \ldots, \Psi_{N})$ with $\Psi_{i} : \mathbb{R} \to \mathbb{R}$ are $C^1$ functions and $\frac{1}{\rho}$-periodic.
\end{mainResult*}
\begin{remark*}
\begin{enumerate}
\item The last result {{II}} shows that the periodic orbit stated in Theorem {{II}} is a stable limit cycle.
\item In the general case, when the field is a trigonometric polynomial, the mathematical existence of the limits (frequencies)
\[
\lim_{t \to +\infty} \frac{x_i(t)}{t}, \quad i = 1, \ldots, n,
\]
is proved in \cite{doi:10.1080/14689367.2023.2170212}. This addresses the mathematical existence of equation (4) posed in \cite{10.1143/PTP.77.1005}.
\item Theorems {{I}} and Main Result {{I}} can be generalized for functions $H(t, X)$ that depend on time $t$.
\end{enumerate}
\end{remark*}

Throughout this paper, and to simplify the presentation, all proofs of the lemmas are provided in the appendix. 
\section{Proof of Linear Results {{I}}$^l$ and {{II}}$^l$: Stability of Perturbed Linear Systems}\label{Chap3GeneralLineaireStability}

To study the stability of coupled systems, as given by equation \eqref{Chap1SystemGeneral} (in particular \eqref{Chap1NotPerturbedSystem}), we will investigate the stability of a class of perturbed linear systems. More precisely, we will prove the two linear results {{I}}$^l$ and {{II}}$^l$ from the linear part.  These perturbed linear systems satisfy an assumption called the "stability assumption",  which is sufficient to decompose the phase space into a central manifold and another exponentially stable manifold.

In order to simplify the notation, we denote  $e(t,s):=\exp(\int_{s}^{t}b(x)dx)$. We denote by $\langle Y,Z \rangle $ the usual scalar product of $Y$ and $Z\in \mathbb{R}^N$, and by $Y^T$ the transpose of the vector $Y\in \mathbb{R}^N$. Recall that we denoted $\mathbbm{1}=(1,\ldots,1)^T\in \mathbb{R}^N$. Let's define

\begin{equation}\label{Chap3definitionA}
\left\{
    \begin{array}{ll}
        A_*(s)=(a_1(s),\ldots,a_N(s)), \\
       \zeta_i(s) =(\zeta_{i,1}(s),\ldots,\zeta_{i,N}(s)),\quad i = 1,\ldots,N.
    \end{array}
\right.
\end{equation}

\subsection{Proof Tools}\label{outildemonstrationstabilitylin}
To prove the linear results {{I}}$^{l}$ and {{II}}$^{l}$, we consider in this section only the stability assumption ($H_{stab}$). This allows us to deduce result {{II}}$^{l}$.
Consider the following non-homogeneous system defined for all $t\geq t'$:

\begin{equation}
	\label{Chap3equation:zi}\left\{
    \begin{array}{ll}
        \dot{Z}^*(t)&=b(t)Z^*(t) +\zeta(t)[z_{N+1}(t)\mathbbm{1}+Z^*(t)+e(t,t')Y],\\
         \dot{z}_{N+1}(t)&=[b(t)+\langle A_*(t),\mathbbm{1} \rangle ]z_{N+1}(t),\\
&+\langle A_*(t),Z^*(t)+e(t,t')Y \rangle, 
    \end{array}
\right.
\end{equation}
where $Z^*(t)=(z_1(t),\ldots,z_N(t))^T$, and $A_*$ is defined by equation \eqref{Chap3definitionA}. The goal of introducing the above system is that the part $E(t',t) = R(t;t') [Y-\psi_{t'}(Y)\mathbbm{1}]$ satisfies the following decomposition:
\[
E(t',t)Y=z_{N+1}(t)\mathbbm{1}+Z^*(t)+e(t,t')Y,
\]
where $Z(t):=(Z^*(t)^T,z_{N+1}(t))^T$ with $z_{N+1}(t)$ and $Z^*(t)=(z_{1}(t),\ldots,z_{N}(t))^T$ is a solution of the coupled non-homogeneous system \eqref{Chap3equation:zi}. The idea is to show that $||Z(t)||<K\exp(-\beta (t-t'))$ with $\alpha>\beta>0$. Here, it is worth noting that the initial condition of $Z(t)$ satisfies $z_{N+1}(t')\mathbbm{1}+Z^*(t')=-\psi_{t'}(Y)\mathbbm{1}$. The problem to solve is to find this suitable initial condition. To do this, we impose that $Z^*(t')=\psi_{t'}(Y)(\mathbbm{1}^T,0)^T$. First, we will see in this section under what assumptions we will have exponential decay of $Z(t)$ to zero. This will allow us to find a specific initial condition $\psi_{t'}(Y)(\mathbbm{1}^T,0)^T$, which in turn will lead to the linear form $\mathcal{L}_{t'}$ in Section \ref{Ingredientformelineaire}. We will need three lemmas, with the third Lemma \ref{Chap3lemmeprincipalestabilite} being the main one. The following lemma enables us to prove Lemma \ref{Chap3lemmepartieexpenontielle}, which in turn allows us to prove the main Lemma \ref{Chap3lemmeprincipalestabilite}.

\begin{lemma}\label{Chap3dispersionstabilite}
Let $b : \mathbb{R}\to\mathbb{R}$ be a periodic function such that
\[
-\alpha:=\int_{0}^{1}b(s)ds<0.
\]
Let $\alpha>\beta>0$, $L>0$, and $D>0$. Consider the following equation:
\[
\frac{d}{dt}\Delta(t) = [b(t)+\beta]\Delta(t)+ D L,
\]
then there exists $D_{0}>0$ such that for all $D<D_0$, the above equation has a solution $\Delta(t)$ that is $1$-periodic and strictly positive, with $\max_{t\in[0,1]}\Delta(t)<1$. The solution $\Delta(t)$ is given by
\[
\Delta(t)=D L \frac{\int_{t}^{t+1}\exp(\int_{s}^{t+1}b(x)+\beta dx)ds}{1-\exp(\beta-\alpha)}.
\]
\end{lemma}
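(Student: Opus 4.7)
My plan is to simply verify the explicit formula by direct computation, rather than re-deriving it from scratch, since the statement already hands us the candidate solution. I would proceed in three short steps: verify the ODE, verify the $1$-periodicity, and extract positivity and the smallness bound.

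First I would use periodicity of $b$ to note the key identity $\int_{t}^{t+1}b(x)\,dx=-\alpha$, so $\exp\!\bigl(\int_{t}^{t+1}[b(x)+\beta]\,dx\bigr)=\exp(\beta-\alpha)$, a constant independent of $t$. Writing $f(t,s):=\exp\!\bigl(\int_{s}^{t+1}[b(x)+\beta]\,dx\bigr)$, I would then differentiate $\Delta(t)=DL\,\frac{\int_{t}^{t+1}f(t,s)\,ds}{1-\exp(\beta-\alpha)}$ using Leibniz's rule. The boundary term at $s=t+1$ yields $f(t,t+1)=1$, the boundary term at $s=t$ yields $f(t,t)=\exp(\beta-\alpha)$, and the interior derivative produces $\partial_{t}f(t,s)=[b(t+1)+\beta]f(t,s)=[b(t)+\beta]f(t,s)$ by $1$-periodicity of $b$. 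Summing, the bracket $1-\exp(\beta-\alpha)$ cancels the denominator and leaves exactly $DL+[b(t)+\beta]\Delta(t)$, confirming that $\Delta$ solves the ODE.

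Next I would check $1$-periodicity of $\Delta$. Starting from the formula for $\Delta(t+1)$ and performing the substitution $u=s-1$ in the integral, both $\int_{u+1}^{t+2}b(x)\,dx=\int_{u}^{t+1}b(x)\,dx$ (by periodicity) and $\int_{u+1}^{t+2}\beta\,dx=\int_{u}^{t+1}\beta\,dx$ hold, so the integrand transforms into $f(t,u)$ and the domain becomes $[t,t+1]$, giving $\Delta(t+1)=\Delta(t)$. For strict positivity: $\beta<\alpha$ implies $1-\exp(\beta-\alpha)>0$, the integrand $f(t,s)$ is everywhere strictly positive, and $DL>0$, so $\Delta(t)>0$ for all $t$.

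Finally, $\Delta(t)$ is linear in $D$ and the coefficient $L\,\int_{t}^{t+1}f(t,s)\,ds/(1-\exp(\beta-\alpha))$ is bounded on $[0,1]$ (continuous and $1$-periodic), so we may set
\[
D_{0}\ :=\ \frac{1-\exp(\beta-\alpha)}{L\,\max_{t\in[0,1]}\int_{t}^{t+1}\exp\!\bigl(\int_{s}^{t+1}[b(x)+\beta]\,dx\bigr)\,ds}\,,
\]
and then for any $D<D_{0}$ we get $\max_{t\in[0,1]}\Delta(t)<1$. I do not expect any obstacle here: the whole statement is essentially a verification once one notices that the constant-in-$t$ identity $\int_{t}^{t+1}b=-\alpha$ makes the Leibniz differentiation collapse cleanly.
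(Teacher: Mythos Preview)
Your verification is correct and complete. The paper does not actually supply a proof of this lemma: the explicit formula is stated and the verification is left implicit, presumably because it is routine once the formula is given. Your three-step check (Leibniz differentiation using $\int_{t}^{t+1}b=-\alpha$, the substitution $u=s-1$ for periodicity, and the linear dependence on $D$ for the smallness bound) is exactly the intended verification and there is nothing to compare against.
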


\begin{lemma}\label{Chap3lemmepartieexpenontielle} Consider the system \eqref{Chap3equation:zi} with $b(t)$ and $\mathcal{A}$ satisfying the stability assumption ($H_{stab}$). Let $\beta\in(0,\alpha)$, then there exist $L>0$ and $D_*>0$ such that for any matrix $\zeta$ that is continuous and has a norm $||\zeta||<D_*$, for any $Y\in\mathbb{R}^N$, and for any solution $Z(t)=(Z^*(t)^T,z_{N+1}(t))^T$ of system \eqref{Chap3equation:zi} with an initial condition $Z(t')=Z\in\mathbb{R}^N$, we have:
\begin{align*}
\forall T>t'\ :\quad &z_{N+1}(T)=0\\
&\implies||Z(t)||<L\exp(-\beta (t-t'))[||Z||+||Y||],\ \forall t \in [t',T].
\end{align*}
\end{lemma}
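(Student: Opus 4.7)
The plan is to close a bootstrap in which $Z^{*}(t)$ is controlled by forward variation of constants while $z_{N+1}(t)$ is controlled by backward integration from the terminal datum $z_{N+1}(T)=0$, with Lemma~\ref{Chap3dispersionstabilite} providing the comparison function. Under $(H_{stab})$, the $1$-periodicity of $b$ together with $\int_0^1 b = -\alpha < 0$ gives $e(t,s) \le M_0\,e^{-\alpha(t-s)}$ for $t\ge s$; dually, the compensation $\int_0^1 (b + \langle A_*,\mathbb{1}\rangle)\,dx = 0$ forces the scalar propagator $u(t,s) := \exp\bigl(\int_s^t (b + \langle A_*,\mathbb{1}\rangle)\,dx\bigr)$ of the homogeneous $z_{N+1}$--equation to be uniformly bounded above and below by positive constants depending only on $b$ and $A_*$.

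First, integrating the scalar $z_{N+1}$--equation backward from $T$ with $z_{N+1}(T)=0$ yields
\[
|z_{N+1}(t)| \;\le\; M_1 \int_t^T \bigl(\|Z^*(s)\| + e(s,t')\,\|Y\|\bigr)\,ds, \qquad t \in [t',T],
\]
while the integral form of the $Z^{*}$--equation gives
\[
\|Z^*(t)\| \;\le\; e(t,t')\,\|Z\| + C_N\|\zeta\| \int_{t'}^t e(t,s)\bigl(|z_{N+1}(s)| + \|Z^*(s)\| + e(s,t')\,\|Y\|\bigr)\,ds,
\]
where $M_1$ and $C_N$ depend only on $N$, $\|A_*\|$ and the bounds on $u$. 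I would run a bootstrap assuming $\|Z^{*}(s)\| \le L\,e^{-\beta(s-t')}(\|Z\|+\|Y\|)$ on $[t',t]$ and seek to reproduce it with a strictly smaller constant. Feeding the bootstrap into the backward integral and using $\alpha>\beta$ to bound $\int_t^T e^{-\beta(s-t')}\,ds$ and $\int_t^T e(s,t')\,ds$ uniformly in $T$, one gets a pointwise estimate $|z_{N+1}(t)| \le C_2\,e^{-\beta(t-t')}(\|Z\|+\|Y\|)$ independent of $T$, which is the crucial $T$--uniform step.

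Next I would substitute this estimate back into the forward inequality and pass to the weighted quantity $\varphi(t) := \|Z^*(t)\|\,e^{\beta(t-t')}$, deriving a Dini differential inequality of the form
\[
\dot\varphi(t) \;\le\; \bigl(b(t)+\beta+C_N\|\zeta\|\bigr)\varphi(t) \;+\; C_N\|\zeta\|\,L_{*}(\|Z\|+\|Y\|),
\]
which fits exactly the template of Lemma~\ref{Chap3dispersionstabilite} provided $D_*$ is small enough that $\beta' := \beta + C_N D_*$ still lies in $(0,\alpha)$ and $D_*\,L_{*}$ falls below the threshold of that lemma. The lemma then furnishes a strictly positive $1$--periodic majorant $\Delta(t)<1$ for $\varphi(t)/(\|Z\|+\|Y\|)$, and a standard comparison argument closes the bootstrap with $\varphi(t) \le L_1(\|Z\|+\|Y\|)$ on $[t',T]$. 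Re-inserting this bound into the backward formula produces the same exponential decay for $|z_{N+1}(t)|$, hence for $\|Z(t)\|$.

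The main obstacle I expect is the circular dependence between the forward and backward estimates: one must calibrate the bootstrap constant $L$, the perturbation threshold $D_{*}$ and the effective rate $\beta'$ simultaneously so that the constant produced by Lemma~\ref{Chap3dispersionstabilite} strictly improves the bootstrap constant. The strict inequality $\beta<\alpha$ is essential here, since it is precisely the spectral gap $\alpha-\beta>0$ that keeps both $\int_t^T e^{-\beta(s-t')}\,ds$ and $\int_{t'}^t e(t,s)\,e^{-\alpha(s-t')}\,ds$ integrable against the weight $e^{\beta\cdot\,}$ uniformly in $T$, making every constant in the argument independent of $T$ and thereby turning the conditional hypothesis $z_{N+1}(T)=0$ into a genuine exponential decay statement on the whole interval $[t',T]$.
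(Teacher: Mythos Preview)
Your proposal is correct and follows essentially the same route as the paper's proof: backward integration of $z_{N+1}$ from the terminal condition $z_{N+1}(T)=0$, a continuity/bootstrap argument on $\|Z^*(t)\|$ against the weight $e^{\beta(t-t')}$, and Lemma~\ref{Chap3dispersionstabilite} supplying the periodic barrier that closes the bootstrap. The only cosmetic difference is that the paper works componentwise with $\Delta_i(t)=z_i(t)e^{\beta(t-t')}/(MC)$ and places all $\zeta$-dependence in the forcing term (so Lemma~\ref{Chap3dispersionstabilite} applies verbatim with rate $b(t)+\beta$), whereas you retain a $C_N\|\zeta\|\varphi$ piece in the homogeneous part and absorb it by passing to $\beta'=\beta+C_ND_*<\alpha$.
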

\begin{proof}
Appendix. A
\end{proof}

The previous Lemma does not provide exponential decay of a solution over $[t',+\infty[$. To achieve this, the strategy in the following Lemma is to consider intervals $[0,T]$ from the previous Lemma, gradually increasing in size. We will approximate a solution $Z_Y(t)$ with an initial condition $Z_Y(t')$ using solutions that satisfy the previous Lemma. This allows us to demonstrate the existence of a solution $Z_Y(t)$ that exponentially decays to zero over $[t',+\infty[$. We will also localize the initial condition $Z_Y(t')$.

\begin{lemma}\label{Chap3lemmeprincipalestabilite} Consider the system \eqref{Chap3equation:zi} with $b(t)$ and $\mathcal{A}$ satisfying the stability assumption ($H_{stab}$). Let $\beta\in(0,\alpha)$, then there exist $K>0$ and $D_*>0$ such that for any $D<D_*$ and any $Y\in\mathbb{R}^N$, if there exists a sequence of solutions $Z_m(t)=(Z^*_m(t)^T,z_{N+1,m}(t))^T$ of \eqref{Chap3equation:zi} with initial condition $Z_{m}(t')=z_{t',m}W$ ($z_{t',m}\in\mathbb{R}$, $W=(\mathbbm{1}^T,0)^T$) and a sequence $(t_m)_m$ that tends to infinity such that $z_{N+1,m}(t_m)=0$, then there exists a solution $Z_{Y}(t)=(Z^*_Y(t)^T,z_{N+1,Y}(t))^T$ of system \eqref{Chap3equation:zi} with initial condition $Z_Y(t')=Z_Y\in \mathbb{R}^N$ such that
\[
||Z_Y(t)||<K\exp(-\beta (t-t'))||Y||, \quad \forall t \geq t'.
\]
Moreover, there exists a subsequence $(Z_{m_{k}}(t'))_{k}$ of $(Z_{m}(t'))_{m}$ such that $Z_Y(t')=\lim_{k\to+\infty}Z_{m_{k}}(t')$.
\end{lemma}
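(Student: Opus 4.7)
The plan is to apply Lemma \ref{Chap3lemmepartieexpenontielle} separately to each $Z_m$ on the interval $[t',t_m]$, deduce a uniform bound on the scalar initial data $z_{t',m}$, extract a convergent subsequence by Bolzano-Weierstrass, and then pass to the limit on the corresponding solutions using continuous dependence on the initial condition for the linear system \eqref{Chap3equation:zi}. The condition $z_{N+1,m}(t_m)=0$ is exactly the hypothesis required by Lemma \ref{Chap3lemmepartieexpenontielle}, so the latter furnishes the a priori estimate
\[
\|Z_m(t)\| < L\, e^{-\beta(t-t')}\bigl[\|Z_m(t')\|+\|Y\|\bigr], \qquad t\in[t',t_m].
\]
Because $Z_m(t')=z_{t',m}W$ with $W=(\mathbb{1}^T,0)^T$ of sup-norm $1$, we have $\|Z_m(t')\|=|z_{t',m}|$.

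Evaluating the estimate at $t=t'$ yields $|z_{t',m}|<L[|z_{t',m}|+\|Y\|]$. Provided $D_*$ has been shrunk enough to force the constant $L$ of Lemma \ref{Chap3lemmepartieexpenontielle} strictly below $1$, this rearranges to
\[
|z_{t',m}|<\frac{L}{1-L}\|Y\|,
\]
so the scalar sequence $(z_{t',m})_m$ is bounded. By Bolzano-Weierstrass, extract a subsequence $z_{t',m_k}\to z_\infty$ and let $Z_Y$ denote the unique solution of \eqref{Chap3equation:zi} with initial datum $Z_Y(t')=z_\infty W$. Since the coefficients of \eqref{Chap3equation:zi} are continuous and uniformly bounded (using $\|\zeta\|<+\infty$ and the $1$-periodic continuity of $b$ and the $a_j$), the standard continuous-dependence theorem for linear ODEs gives $Z_{m_k}(t)\to Z_Y(t)$ uniformly on every compact subinterval of $[t',+\infty)$.

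Fix any $t\ge t'$. Since $t_{m_k}\to +\infty$, we have $t\in[t',t_{m_k}]$ for all sufficiently large $k$, so the estimate above applies to $Z_{m_k}(t)$; letting $k\to\infty$ and using the uniform bound on $|z_{t',m_k}|$ gives
\[
\|Z_Y(t)\|\le L\, e^{-\beta(t-t')}\Bigl[\tfrac{L}{1-L}+1\Bigr]\|Y\|=\tfrac{L}{1-L}\, e^{-\beta(t-t')}\|Y\|,
\]
so the claimed exponential decay holds with $K:=L/(1-L)$ (enlarged slightly if strict inequality is desired), and by construction $Z_Y(t')=\lim_k Z_{m_k}(t')$.

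The delicate step is arranging $L<1$: this is the whole purpose of the further restriction of $D_*$, and requires revisiting the proof of Lemma \ref{Chap3lemmepartieexpenontielle} to make the dependence of $L$ on $D_*$ explicit — ultimately through the amplitude of the $1$-periodic function $\Delta$ built in Lemma \ref{Chap3dispersionstabilite}, which scales linearly with $D$. Once that quantitative point is secured, the remaining ingredients (Bolzano-Weierstrass, continuous dependence of linear ODEs on initial data, and passage to the limit in the exponential estimate) are entirely standard.
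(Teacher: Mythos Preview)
Your approach has a genuine gap: the constant $L$ from Lemma~\ref{Chap3lemmepartieexpenontielle} can \emph{never} be made strictly less than $1$, no matter how small $D_*$ is. To see this, apply the conclusion of Lemma~\ref{Chap3lemmepartieexpenontielle} at $t=t'$ with $Y=0$ and any nonzero initial datum $Z$: the estimate reads $\|Z\|<L\|Z\|$, which forces $L>1$. Your intuition about the dependence on $D$ is in fact reversed: in the proof of Lemma~\ref{Chap3lemmepartieexpenontielle} one chooses $M>1$ so that $1/M<\Delta(t')$, and since the periodic function $\Delta$ of Lemma~\ref{Chap3dispersionstabilite} scales linearly with $D$, shrinking $D$ drives $\Delta(t')\to 0$ and hence $M\to\infty$, so $L=M\max(1,\ldots)\to\infty$. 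Evaluating the Lemma~\ref{Chap3lemmepartieexpenontielle} bound at $t=t'$ is therefore vacuous and cannot yield a uniform bound on $|z_{t',m}|$.

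The paper obtains the uniform bound on $|z_{t',m}|$ by a different mechanism. It writes each component as $z_{i,m}(t)=e(t,t')\,z_{t',m}+F_{i,m}(t)$ via variation of constants, and uses the a~priori estimate of Lemma~\ref{Chap3lemmepartieexpenontielle} only to bound the forcing term $F_{i,m}$, which carries an explicit factor of $D$ coming from $\zeta$. The constraint $z_{N+1,m}(t_m)=0$ is then rewritten as an integral identity in which the coefficient of $z_{t',m}$ is essentially $1-\exp\bigl(-\int_{t'}^{t_m}\langle A_*,\mathbb{1}\rangle\bigr)$, bounded away from zero as $t_m\to\infty$, while the remaining terms are controlled by $\|Y\|$ plus an $O(D)$ multiple of $|z_{t',m}|$. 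For $D$ small this can be rearranged to give $|z_{t',m}|\le C\|Y\|$ uniformly in $m$. Once that bound is secured, your compactness and continuous-dependence argument to pass to the limit is correct and matches the paper's conclusion.
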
 
\begin{proof}
Appendix. B
\end{proof} 
\subsection{Ingredients for the linear form $\mathcal{L}_{t'}$}\label{Ingredientformelineaire}
In this section, we will show the existence of a family of solutions to the system \eqref{Chap3equation:zi} that satisfies the assumptions of Lemma \ref{Chap3lemmeprincipalestabilite} in the previous section. To do this, we only need to determine the initial conditions $Z_{m}(t')=z_{t',m}W$. Note that it is sufficient to determine the sequence of real numbers $(z_{t',m})_m$. 

In this section, we consider $b(t)$ and $\mathcal{A}$ satisfying only the stability assumption $(H_{stab})$ without distinction in the matrix $\zeta$. This allows us, in particular, to deduce the linear result {{II}}$^l$.

To obtain more information about the solutions $Z_m(t)$ of Lemma \ref{Chap3lemmeprincipalestabilite}, we will express them in terms of the fundamental matrix. Let $S(t;t')=\{s_{i,j}(t;t')\}_{\substack{1\le i \le N+1 \\ 1\le j \le N}}$ be the fundamental matrix of the homogeneous linear system associated with the system \eqref{Chap3equation:zi} as follows:
\begin{equation}\label{Chap3systeme:homogenezi}
	\left\{
    \begin{array}{ll}
        \dot{X}^*(t)=b(t)X^*(t)+\zeta(t)[x_{N+1}(t)\mathbbm{1}+X^*(t)],\\
X^*(t)=(x_1(t),\ldots,x_N(t))^T,\\
        \dot{X}_{N+1}(t)=[b(t)+\langle A_*(t),\mathbbm{1} \rangle ]x_{N+1}(t)+\langle A_*,X^*(t) \rangle,
    \end{array}
\right.
\end{equation}
where we recall that $A_*(s)$ is given by Equation \eqref{Chap3definitionA}. In order to simplify the notation, denote $P(t,s)=\exp\Big{(}\int_{s}^{t}b(x)+\sum_{j=1}^{N}a_j(x)dx\Big{)}$. The solution $Z(t)=(Z^*(t)^T,z_{N+1}(t))^T$ of \eqref{Chap3equation:zi} with initial condition $Z(t')\in\mathbb{R}^N$ can be written in terms of the fundamental matrix as follows:
\begin{align}
\label{Chap3operateurZ}
Z^*(t)&=S^*(t;t')Z(t')+S_1(t;t')Y,\\
\label{Chap3equationintegralezN+1}
z_{N+1}(t)&=P(t,t')z_{N+1}(t')\\
\nonumber&+ P(t,t')\int_{t'}^{t}\langle A_*(s),S^*(s;t')Z(t')  \rangle P(t',s)ds\\
\nonumber&+P(t,t')\int_{t'}^{t}\langle A_*(s), S_1(s;t')Y+e(s,t')Y \rangle P(t',s)ds,
\end{align}
where $S^*(t;t')=\{s_{i,j}(t;t')\}_{\substack{1\le i \le N\\ 1\le j \le N}}$ and $S_1(t;t')$ is an operator that does not depend on $Z(t')$ and $Y$ such that $S(t',t')=0$.

As mentioned earlier, we aim to show the existence of solutions to the system \eqref{Chap3equation:zi} that satisfy the assumptions of Lemma \ref{Chap3lemmeprincipalestabilite}. In Lemma \ref{Chap3lemmeprincipalestabilite}, we have $Z_{m}(t')=z_{t',m}W$, so for a sequence $(T_m)_m$ tending to infinity such that $Z_{m,N+1}(T_m)=0$ and using the notation from the previous equation \eqref{Chap3equationintegralezN+1}, the sequence of real numbers $(z_{t',m})_m$ must be defined as
\[
z_{t',m}=-\frac{\int_{t'}^{T_{m}}\langle A_*(s),S_1(s;t')Y+e(s,t')Y \rangle P(t',s)ds}{\int_{t'}^{T_{m}}\langle A_*(s),S^*(s;t')W \rangle P(t',s)ds}.
\]
In Lemma \ref{Chap3lemme:formelineaires}, we will show that this sequence $(z_{t',m})_m$ is well-defined, i.e., the denominator of the quotient on the right-hand side of the equation is nonzero.

\begin{lemma}\label{Chap3lemme:formelineaires}
Let $S(t;t')=\{s_{i,j}(t;t')\}_{\substack{1\le i \le N+1 \\ 1\le j \le N}}$ be the fundamental matrix of the homogeneous linear system \eqref{Chap3systeme:homogenezi} associated with the system \eqref{Chap3equation:zi}. Let $S^*(t;t')=\{s_{i,j}(t;t')\}_{\substack{1\le i \le N\\ 1\le j \le N}}$. Suppose that $b(t)$ and $\mathcal{A}$ satisfy the stability assumption $(H_{stab})$. Define
\begin{align}\label{Chap3contradictionH}
H(t,t')&:= \int_{t'}^{t}\langle A_*(s),S^*(s;t')W \rangle P(t',s)ds,\\
\notag  W&:=(\mathbbm{1}^T,0)^T,\ t\geq t'.
\end{align}
Then, for any $\beta\in(0,\alpha)$, there exists $D_*>0$ such that for any continuous matrix $\zeta$ with norm $||\zeta||<D_*$, there exists $T_W>0$ such that for all $t\geq T_W$, we have $H(t,t')\neq0$.
\end{lemma}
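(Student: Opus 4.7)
The strategy is to argue by contradiction, using Lemma \ref{Chap3lemmeprincipalestabilite} from the previous subsection.

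First, I would reinterpret $H(t,t')$ as the $(N+1)$-th component of a specific solution of the homogeneous system. Specialising the integral representation \eqref{Chap3equationintegralezN+1} to $Y=0$ and to the initial condition $Z(t')=W=(\mathbb{1}^T,0)^T$, the system \eqref{Chap3equation:zi} reduces to the homogeneous system \eqref{Chap3systeme:homogenezi}, and since the last component of $W$ is zero every term on the right-hand side of \eqref{Chap3equationintegralezN+1} except the middle one vanishes, giving the identity
\[
z_{N+1}(t)=P(t,t')\,H(t,t'),\qquad t\geq t'.
\]
Because $P(t,t')>0$, the vanishing of $H(\cdot,t')$ at a time $t$ is equivalent to $z_{N+1}(t)=0$ for the unique solution of \eqref{Chap3systeme:homogenezi} starting at $W$.

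Second, fix $\beta\in(0,\alpha)$ and let $D_*>0$, $K>0$ be the constants furnished by Lemma \ref{Chap3lemmeprincipalestabilite} for this $\beta$. Take $||\zeta||<D_*$ and suppose, for contradiction, that there exists a sequence $t_m\to+\infty$ with $H(t_m,t')=0$. Set $Z_m(t)\equiv Z(t)$, the unique solution with $Y=0$ and $Z(t')=W$ (so that $z_{t',m}\equiv 1$); by the first step, $z_{N+1,m}(t_m)=0$, so the hypotheses of Lemma \ref{Chap3lemmeprincipalestabilite} are met with $Y=0$. Its conclusion then produces a solution $Z_Y$ of \eqref{Chap3equation:zi} obeying
\[
||Z_Y(t)||<K\exp(-\beta(t-t'))\,||Y||=0,\qquad \forall\, t\geq t',
\]
so $Z_Y\equiv 0$ and, in particular, $Z_Y(t')=0$. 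But the same lemma asserts that $Z_Y(t')$ is the limit of a subsequence of $(Z_m(t'))_m=(W)_m$, forcing $Z_Y(t')=W\neq 0$. This is a contradiction.

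Third, the contradiction shows that $\{t\geq t': H(t,t')=0\}$ is bounded above, so any upper bound supplies the desired $T_W$; the constant $D_*$ is inherited directly from Lemma \ref{Chap3lemmeprincipalestabilite}. The whole argument is essentially a one-line application of that lemma, the subtle point being only that choosing $Y=0$ collapses the exponential bound to the trivial bound $||Z_Y||\equiv 0$, which is incompatible with the nontrivial constant sequence of initial conditions $(W)_m$. I do not anticipate any serious technical obstacle, since the analytic heavy lifting was carried out in Lemmas \ref{Chap3dispersionstabilite}--\ref{Chap3lemmeprincipalestabilite}.
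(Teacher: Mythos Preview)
Your proposal is correct and follows essentially the same approach as the paper: identify $z_{N+1}(t)=P(t,t')H(t,t')$ for the homogeneous solution starting at $W$, assume a sequence of zeros $t_m\to\infty$, and apply Lemma~\ref{Chap3lemmeprincipalestabilite} with $Y=0$ and the constant sequence $Z_m(t')=W$ to force the contradiction $W=Z_Y(t')=0$. Your write-up is in fact a bit more explicit than the paper's, which compresses the final step by writing ``$||Z(t)||\equiv 0$'' without spelling out that $Z_Y(t')$ is the limit of the constant subsequence $(W)_m$.
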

\begin{proof} 
Appendix. C
\end{proof} 

Therefore, in the following proposition, we show that the system \eqref{Chap3equation:zi} admits a sequence of solutions that satisfy the assumptions of Lemma \ref{Chap3lemmeprincipalestabilite}.

\begin{proposition}\label{Chap3solutionpropositionlineare}
Consider the system \eqref{Chap3equation:zi} with $b(t)$ and $\mathcal{A}$ satisfying the stability hypothesis ($H_{stab}$). Let $\beta\in(0,\alpha)$. Then there exist $D_*>0$ and $K>0$, such that for any continuous matrix $\zeta$ with $||\zeta||<D_*$ and any $Y\in\mathbb{R}^N$, there exists a solution $Z_Y(t) = (Z^*_Y(t)^T, z_{N+1,Y}(t))^T$ of \eqref{Chap3equation:zi} such that
\[
||Z_Y(t)|| < K\exp(-\beta (t-t'))||Y||, \quad \forall t \geq t',
\]
and $Z_Y(t')=\psi_{t'}(Y)W$, where
\[
\psi_{t'}(Y) = \lim_{t_{m}\to+\infty}\frac{-1}{H(t_{m},t')} \int_{t'}^{t_{m}}\langle A_*(s), S_1(s;t')Y + e(s,t')Y \rangle P(t',s)ds,
\]
and where $(t_{m})_m$ is a sequence of real numbers that tends to infinity, and $H(t_m,t')$ is given by Lemma \ref{Chap3lemme:formelineaires}.
\end{proposition}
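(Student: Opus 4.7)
The plan is to manufacture, for each $Y \in \mathbb{R}^N$, the exact sequence of solutions $(Z_m)_m$ required as input to Lemma~\ref{Chap3lemmeprincipalestabilite}, using the formula for $z_{t',m}$ derived from the integral representation \eqref{Chap3equationintegralezN+1} together with the non-vanishing of $H(t,t')$ guaranteed by Lemma~\ref{Chap3lemme:formelineaires}. Then Lemma~\ref{Chap3lemmeprincipalestabilite} directly supplies the solution $Z_Y(t)$ with the required exponential bound, and the formula for $\psi_{t'}(Y)$ is obtained simply by passing to the limit along the convergent subsequence of initial data.

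More precisely, I would first fix $\beta \in (0,\alpha)$ and take $D_*$ to be the minimum of the thresholds provided by Lemmas~\ref{Chap3lemmeprincipalestabilite} and \ref{Chap3lemme:formelineaires}, and let $K$ be the constant from Lemma~\ref{Chap3lemmeprincipalestabilite}. For any continuous $\zeta$ with $||\zeta||<D_*$, Lemma~\ref{Chap3lemme:formelineaires} yields $T_W>0$ such that $H(t,t')\neq 0$ for every $t \geq T_W$. Choose any sequence $(T_m)_m$ with $T_m \geq T_W$ and $T_m \to +\infty$, and define
\[
z_{t',m} := -\frac{\int_{t'}^{T_m}\langle A_*(s),\,S_1(s;t')Y + e(s,t')Y\rangle P(t',s)\,ds}{H(T_m,t')}.
\]
This is well defined by Lemma~\ref{Chap3lemme:formelineaires}. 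Setting $Z_m(t')=z_{t',m}W$ and letting $Z_m(t)=(Z_m^*(t)^T,z_{N+1,m}(t))^T$ be the corresponding solution of \eqref{Chap3equation:zi}, the representation formula \eqref{Chap3equationintegralezN+1} applied at $t=T_m$ shows by construction that $z_{N+1,m}(T_m)=0$.

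With this sequence in hand, the hypotheses of Lemma~\ref{Chap3lemmeprincipalestabilite} are verified, so it produces a solution $Z_Y(t)$ of \eqref{Chap3equation:zi} satisfying $||Z_Y(t)|| < K\exp(-\beta(t-t'))||Y||$ for all $t\geq t'$, together with a subsequence $(Z_{m_k}(t'))_k$ converging to $Z_Y(t')$. Since every $Z_m(t')$ lies in the one-dimensional subspace $\mathbb{R}\cdot W$, so does the limit, giving $Z_Y(t')=\psi_{t'}(Y) W$ with $\psi_{t'}(Y)=\lim_{k\to\infty} z_{t',m_k}$; substituting the explicit formula for $z_{t',m_k}$ yields the stated limit expression for $\psi_{t'}(Y)$.

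The only nontrivial step is checking that the initial conditions $Z_m(t')=z_{t',m}W$ produced by this ansatz genuinely realize the assumption $z_{N+1,m}(T_m)=0$ of Lemma~\ref{Chap3lemmeprincipalestabilite}. This is essentially algebraic once \eqref{Chap3equationintegralezN+1} is specialized to the initial datum $z_{t',m}W$: the two integral terms involving $Y$ are the numerator of $z_{t',m}$ and the term involving $z_{t',m}$ factors into $z_{t',m}\cdot H(T_m,t')$, so the definition of $z_{t',m}$ forces cancellation. All remaining work, namely the convergence and the exponential bound, is entirely offloaded onto Lemma~\ref{Chap3lemmeprincipalestabilite}, which is why the proof is short.
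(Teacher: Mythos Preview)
Your proposal is correct and follows essentially the same route as the paper's own proof: both use Lemma~\ref{Chap3lemme:formelineaires} to ensure $H(t_m,t')\neq 0$ for large $t_m$, define $z_{t',m}$ by the quotient formula so that \eqref{Chap3equationintegralezN+1} forces $z_{N+1,m}(t_m)=0$, and then invoke Lemma~\ref{Chap3lemmeprincipalestabilite} to extract the limiting solution $Z_Y(t)$ with the stated exponential bound and initial condition $\psi_{t'}(Y)W$. Your write-up is in fact slightly more explicit than the paper's (e.g.\ taking $D_*$ as the minimum of the two thresholds, and noting that the limit of the $Z_m(t')$ stays in $\mathbb{R}\cdot W$).
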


\begin{proof}
According to Lemma \ref{Chap3lemme:formelineaires}, for any sequence $(t_m)_m$ such that $t_m>T_W$, we have $H(t_m,t')\neq0$. From \eqref{Chap3equationintegralezN+1}, the solution $Z_{m}(t) = (Z^*_m(t)^T, z_{N+1,m}(t))^T$ of \eqref{Chap3equation:zi} with the initial condition $z_{m,t'} W$ such that
\[
z_{m,t'}=-\frac{1}{H(t_m,t')} \int_{t'}^{t_m}\langle A_*(s),S_1(s;t')Y+e(s,t')Y \rangle P(t',s)ds,
\]
satisfies:
\begin{align*}
z_{N+1,m}(t_{m})&=0\\
&=P(t',t) z_{m,t'}H(t_m,t')\\
&+ P(t',t) \int_{t'}^{t_m}\langle A_*(s),S_1(s;t')Y+e(s,t')Y \rangle P(t',s)ds.
\end{align*}
According to Lemma \ref{Chap3lemmeprincipalestabilite}, there exists a solution $Z_Y(t)$ such that $||Z_Y(t)|| < L \exp(-\beta (t-t'))||Y||$ for all $t\geq t'$, with $Z_Y(t')=\psi_{t'}(Y) W$, where
\begin{align*}
&\psi_{t'}(Y) = -\lim_{t_{m_{k}}\to\infty}\frac{\int_{t'}^{t_{m_{k}}}\langle A_*(s),S_1(s;t')Y+e(s,t')Y \rangle P(t',s)ds}{H(t_{m_{k}},t')}.
\end{align*}
\end{proof} 
\subsection{Decomposition of the fundamental matrix}
Finally, we will prove results {{I}}$^l$ and {{II}}$^l$ in the context of linear systems. To ensure consistency in the proof, we will first demonstrate the general case, which is the second linear result {{II}}$^l$ that does not require the matrix $\zeta(t)$ to be normalizing.

\begin{proof}[Proof of the second linear result {{II}}$^l$: General Case]~~\\
{\it Item 1.} Let's show that $||R(t;t')[Y+\psi_{t'}(Y)\mathbbm{1}]||$ decreases exponentially: For any $\beta>0$ and $||\zeta||=D<D_*$, where $D_*$ is defined by Lemma \ref{Chap3lemmeprincipalestabilite}, let $\psi_{t'}(Y)$ be given by the previous proposition \ref{Chap3solutionpropositionlineare}. Take $Y\in \mathbb{R}^N$, we add and subtract the same term in $\phi^t(Y)$ as follows:
\begin{align}\label{Chap3premieredecomposition}
\phi^t(Y)=-\psi_{t'}(Y) R(t;t') \mathbbm{1}+R(t;t')[Y+\psi_{t'}(Y)\mathbbm{1}].
\end{align}
Let $R(t;t')[Y+\psi_{t'}(Y)\mathbbm{1}]=z_{N+1}(t)\mathbbm{1}+Z^*(t)+e(t;t')Y$ with $z_{N+1}(t)$ being a solution with the initial condition $z_{N+1}(t')=0$ of the equation
\begin{align*}
\dot{z}_{N+1}(t)&=[b(t)+\langle A_*(t),\mathbbm{1} \rangle ]z_{N+1}(t)\\
&+\langle A_*(t),R(t;t')[Y+\psi_{t'}(Y)\mathbbm{1}]-z_{N+1}(t)\mathbbm{1} \rangle .
\end{align*}
Thus, $Z(t) = (Z^*(t),z_{N+1}(t))$ is a solution of the nonhomogeneous linear equation \eqref{Chap3equation:zi}. From equation \eqref{Chap3premieredecomposition}, we deduce that it has the initial condition $Z(t')=W \psi_{t'}(Y)$. Proposition \ref{Chap3solutionpropositionlineare} implies that
\[
||Z(t)|| < K\exp(-\beta(t-t'))||Y||, \quad \forall t \geq t'.
\]
Hence, $||R(t;t')[Y+\psi_{t'}(Y)\mathbbm{1}]|| < [K+\exp(c_b)]\exp(-\beta(t-t'))||Y||$ for all $t \geq t'$.\\
{\it Item 2.}  Let's show that when the system has a solution that does not decay to zero exponentially, then $\psi_{t}(Y) =\psi_{s}(R(s;t)Y)$:

Suppose that the system \eqref{Chap1li} has a solution that does not exponentially decay to zero. Let $V(t)$ be this solution. We have:
\begin{align*}
R(t;s)R(s;t')Y &= \mathcal{L}_{t'}(Y)V(t) - R(t;t') [Y-\mathcal{L}_{t'}(Y)V(t')]\\
&= \mathcal{L}_{s}(R(s;t')Y)V(t)\\
& - R(t;s) [R(s;t')Y-\mathcal{L}_{s}(R(s;t')Y)V(s)].
\end{align*}
By consequence, 
\begin{align*}
 [\mathcal{L}_{t'}(Y)& -\mathcal{L}_{s}(R(s;t')Y)]V(t) \\
&= R(t;s) [R(s;t')\mathcal{L}_{t'}(Y)V(t')+\mathcal{L}_{s}(R(s;t')Y)V(s)].
\end{align*}
Since the right-hand side of the last equation satisfies
\begin{align*}
&||R(t;s) [R(s;t')\mathcal{L}_{t'}(Y)V(t')+\mathcal{L}_{s}(R(s;t')Y)V(s)]||\\
& < K [\exp(-\beta(t-t'))+\exp(-\beta(t-s))] ||Y||.
\end{align*}
While the left-hand side is a linear form multiplied by the function $V(t)$, which does not decay exponentially to zero, we must have
\begin{align*}
\mathcal{L}_{t'}(Y) -\mathcal{L}_{s}(R(s;t')Y) = 0,\quad\forall t'\geq 0,\forall s\geq t'.
\end{align*} 
\end{proof}

Now, we will prove the first linear result {{I}}$^l$. We consider the particular case where $\zeta(t)$ is a normalizing matrix. By Definition \ref{normalizingproof}, the system \eqref{Chap1li} has a solution $V(t) = (v_1(t),\ldots,v_N(t))^T$ such that
\[
\inf_{t\in\mathbb{R}}||V(t)|| > 0,\quad\text{and}\quad \sup_{t\in\mathbb{R}}||V(t)|| < +\infty.
\]
We denote in the following
\[
\alpha_- = \inf_{t\in\mathbb{R}}||V(t)|| > 0,\quad\text{and}\quad \alpha_+ = \sup_{t\in\mathbb{R}}||V(t)|| < +\infty.
\]
We define in the proof below the linear form $\mathcal{L}_{t'}$ as
\[
\mathcal{L}_{t'}(Y) = \frac{\psi_{t'}(Y)}{\psi_{t'}(V(t'))},
\]
where $\psi_{t'}$ is defined by the previous proposition \ref{Chap3solutionpropositionlineare}. We note that by uniqueness, $\mathcal{L}_{t'}$ is defined as
\[
\mathcal{L}_{t'}(Y) = \lim_{t\to\infty}\frac{ \int_{t'}^{t}\langle A_*(s),S_1(s)Y+e(s,t')Y \rangle P(t',s)ds}{ \int_{t'}^{t}\langle A_*(s),S_1(s)V(t')+e(s,t')V(t') \rangle P(t',s)ds}.
\] 
\begin{proof}[Proof of linear result {{I}}$^l$]~\\
Let's prove $\mathcal{L}_{t}(R(t;t')V(t'))=1$:\\
By the definition of $\mathcal{L}_{t'}$, we have $\mathcal{L}_{t}(V(t))=1$ for all $t\in \mathbb{R}$. Therefore,
\begin{equation}\label{formelineairVinvariante}
\mathcal{L}_{t}(R(t;t')V(t'))=1=\mathcal{L}_{t'}(V(t')),\quad\forall t'\in\mathbb{R}\quad\forall t\geq t'.
\end{equation}
{\it Item 1.}  Construction of the linear form $\mathcal{L}_{t}$:\\
We add and subtract the same term in $V(t)$, so we have
\begin{equation}\label{Chap3relationV}
R(t;t') V(t')=V(t)=-\psi_{t'}(V(t')) R(t;t') \mathbbm{1}+R(t;t')[V(t')+\psi_{t'}(V(t'))\mathbbm{1}].
\end{equation}
According to proposition \ref{Chap3solutionpropositionlineare}, we have $||\psi_{t'}(V(t'))||<K ||V(t')||<K\alpha_{+}$ for all $t'\geq0$. Furthermore,
\begin{align*}
|\psi_{t'}(V(t'))| ||R(t;t') \mathbbm{1}||&=||V(t)-R(t;t')[V(t')+\psi_{t'}(V(t'))\mathbbm{1}]||\\
&>||V(t)||-||R(t;t')[V(t')+\psi_{t'}(V(t'))\mathbbm{1}]||\\
&>\alpha_{-}-K\exp(-\beta(t-t'))||V(t')||\\
&>\alpha_{-}-\alpha_{+}K\exp(-\beta(t-t')),\quad\forall t'\in\mathbb{R},.
\end{align*}
We integrate over a compact of length $\delta$ fixed such that $1<<\delta<+\infty$; let $t=t'+\delta$, we have from \eqref{Chap1li}: $||R(t;t')\mathbbm{1}||<\exp((c_b+c_a+D_*)\delta)$ where in order to simplify the notation we denoted $c_{b}=\max_{t\in[0,1]}|b(t)|$ and $c_{a}=\max_{t\in[0,1]}\sum_{j=1}^{N}|a_j(t)|$. 
\begin{equation}\label{Chap3minzV}
|\psi_{t'}(V(t'))| >\frac{\alpha_{-}-\alpha_{+}K\exp(-\beta\delta)}{\exp((2\alpha+c_b+c_a+D)\delta)}>0,\ \forall t'\in\mathbb{R},\ \forall t\geq t'.
\end{equation}
Let's define
\[
\mathcal{L}_{t'}(Y)=\frac{\psi_{t'}(Y)}{\psi_{t'}(V(t'))}.
\]
{\it Item 2.}  From \eqref{Chap3minzV} and according to proposition \ref{Chap3solutionpropositionlineare}, there exists $K_1>0$ such that $\mathcal{L}_{t'}(Y)< K_1 ||Y||$. \\
{\it Item 3.}  Exponential decay:\\
According to equation \eqref{formelineairVinvariante} for all $Y\in\mathbb{R}$
\begin{align*}
\phi^t(Y)&=\mathcal{L}_{t'}(Y)V(t)-R(t;t') [Y-\mathcal{L}_{t'}(Y)V(t')]\\
&=\mathcal{L}_{t'}(Y)V(t)\\
&+R(t;t') [Y-\psi_{t'}(Y)\mathbbm{1}-(\mathcal{L}_{t'}(Y)V(t')-\frac{\psi_{t'}(Y)}{\psi_{t'}(V(t'))}\psi_{t'}(V(t'))\mathbbm{1})]\\
&=\mathcal{L}_{t'}(Y)V(t)\\
&+R(t;t') [Y-\psi_{t'}(Y)\mathbbm{1}-(\mathcal{L}_{t'}(Y)V(t')-\psi_{t'}(\mathcal{L}_{t'}(Y)V(t'))\mathbbm{1})].
\end{align*}
Thanks to the previously demonstrated linear result {{II}}$^l$, there exists $K_*>0$ such that we have
\begin{align*}
&||R(t;t') [Y-\psi_{t'}(Y)\mathbbm{1}]||<K_*\exp(-\beta(t-t'))||Y||,\quad\forall t\geq t'\\
&||R(t;t') [\mathcal{L}_{t'}(Y)V(t')-\psi_{t'}(\mathcal{L}_{t'}(Y)V(t'))\mathbbm{1}]<K_*\exp(-\beta(t-t'))||Y||.
\end{align*}
{\it Item 4.}  Finally, let's show that $\mathcal{L}_{t'}(Y) = \mathcal{L}_{s}(R(s;t')Y)$:
We have
\begin{align*}
R(t;s)R(s;t')Y&=\mathcal{L}_{t'}(Y)V(t)-R(t;t') [Y-\mathcal{L}_{t'}(Y)V(t')]\\
&=\mathcal{L}_{s}(R(s;t')Y)V(t)\\
&-R(t;s) [R(s;t')Y-\mathcal{L}_{s}(R(s;t')Y)V(s)].
\end{align*}
Therefore,
\begin{align*}
[\mathcal{L}_{t'}(Y)& -\mathcal{L}_{s}(R(s;t')Y)]V(t)\\
&=R(t;s) [R(s;t')\mathcal{L}_{t'}(Y)V(t')+\mathcal{L}_{s}(R(s;t')Y)V(s)].
\end{align*}
Since $\min_{t\in\mathbb{R}}||V(t)||=\alpha_->0$ and as the right-hand side of the last equation verifies
\begin{align*}
||R(t;s) [R(s;t')&\mathcal{L}_{t'}(Y)V(t')+\mathcal{L}_{s}(R(s;t')Y)V(s)]||\\
&<K [\exp(-\beta(t-t'))+\exp(-\beta(t-s))] ||Y||.
\end{align*}
By consequence, $\mathcal{L}_{t'}(Y) -\mathcal{L}_{s}(R(s;t')Y)=0$ for all $t'\geq 0$ and $s\geq t'$. 
\end{proof}

\section{Proof of results {{I}} and {II}: Stability of mean-field systems}\label{Chap4GeneralNonLineaireStability}

In this section, we prove the two results {{I}} and {II}. We will study the stability of coupled systems given by equations \eqref{Chap1SystemGeneral} (in particular \eqref{Chap1NotPerturbedSystem}) and exhibiting a synchronized state. We linearize the system around a synchronized orbit and then apply the stability results obtained in Section \ref{Chap3GeneralLineaireStability}. Recall that the two systems \eqref{Chap1SystemGeneral} and \eqref{Chap1NotPerturbedSystem} are given by the following two equations:
\begin{equation}\label{Chap4NotPerturbedSystem1} \tag{NP}
\dot{x}_i=F(X,x_i),\quad 1\le i \le N,\quad t\geq t_0,
\end{equation}
\begin{equation}\label{Chap4SystemGeneral1} \tag{P}
\dot{x}_i=F(X,x_i)+H_i(X,x_i),\quad 1\le i \le N,\quad t \geq t_0,
\end{equation}
where $N\geq 2$ and $t_0 \in\mathbb{R}$ is the initial time. $F : \mathbb{R}^N\times\mathbb{R}\to\mathbb{R}$ and $H_i : \mathbb{R}^N\to\mathbb{R}$ are $C^1$ functions, and we denote $H=(H_1,\ldots,H_N)$. Recall that we denoted $\Phi^t$ the flow of the system \eqref{Chap1SystemGeneral} (including \eqref{Chap1NotPerturbedSystem}).

We have seen  \cite{2017arXiv170307692O,doi:10.1080/14689367.2016.1227303} that when the norm $||H||_B$ is sufficiently small and under the assumptions $(H)$ and $(H_*)$, the global phase dispersion of a solution of the system \eqref{Chap4SystemGeneral1} (and \eqref{Chap4NotPerturbedSystem1}) with an initial condition in the synchronization region remains uniformly bounded by some constant $D>0$.

\subsection{Linearization of System (P)}

In order to simplify the notation, define the function $F_i: \mathbb{R}^N\times \mathbb{R}\to \mathbb{R}$ as
\[
F_i(Y,z):=F(Y,z)+H_i(Y,z),\quad\forall Y\in\mathbb{R}^N,\ \forall z\in \mathbb{R},\quad\forall 1 \le 1 \le N,
\]
and denote the differential of the vector function $(F_1,\ldots,F_N)$ as $d\mathcal{F}(t)$.

Without loss of generality, for $Z \in \mathbb{R}^N$,  we'll write $Z:=\Phi^t(Z)$, so the elements of the matrix $dF(\Phi^t(Z)):=\{g_{i,j}\}_{i,j}$ are given by 
\begin{equation}
\label{Chap4element_jacobienne}
\left\{
\begin{aligned}
g_{ii}(t)&=\partial_{N+1}{F}_{i}(Z,z_i)+\partial_i {F}_{i}(Z,z_i),\ 1\le i \le N,\\ 
g_{ij}(t)&=\partial_{j}{F}_{i}(Z,z_i), 1\le i \neq j \le N.
\end{aligned}
\right.
\end{equation}

The strategy in the following is to apply the linear stability results obtained in Section \ref{Chap3GeneralLineaireStability}. To do this, we'll choose an appropriate linearization of the system \eqref{Chap4SystemGeneral1}. More specifically, for any $Z\in \mathbb{R}^N$, we consider the following linear system:
\begin{equation}\label{Chap4lin}
\frac{d}{dt}Y(t)=d\mathcal{F}(\Phi^t(Z))Y(t),\quad t\geq t_0,\quad Y(t)=(y_1(t),\ldots,y_N(t))^T.
\end{equation}

Also, recall that for $Z \in \mathbb{R}^N$, we denote $\mu_Z:=\mu_Z(t)$ as the solution of the system \eqref{Chap2position} associated with $\Phi^t(Z)$ and with the initial condition $\mu_Z(t_0)\in\mathbb{R}$ (see Definition \ref{SPN}).

In the following lemma, we'll see that the system \eqref{Chap4lin} can be written in the form of the linear systems \eqref{Chap1li}. We denote $I_N$ as the identity matrix of order $N$.

\begin{definition}\label{SPN}
For $X\in \mathbb{R}^N$, we define the {\it System} (SNP) {\it associated with} $\Phi^t(X)$ as the unperturbed system given by
\begin{align}\label{Chap2position}\tag{SNP}
\dot{\mu}_X = F(\Phi^t(X), \mu_X),\quad t\in I_X,
\end{align}
where $I_X = [t_{0}, T_X)$ is the maximal interval of the solution $X(t) := \Phi^t(X)$ of the system \eqref{Chap1SystemGeneral} with initial condition $\phi^{t_{0}}(X) = X$ fore some $t_0\in I_X$. We refer to $\mu_X(t)$ as the solution of the system \eqref{Chap2position} associated with $\Phi^t(X)$ with initial condition $\mu_X(t_{0})\in\mathbb{R}$.
\end{definition}
\begin{lemma}\label{lemmefinalstabilitynonlinear}
Consider the coupled nonlinear system \eqref{Chap4SystemGeneral1}. Suppose that $F$ satisfies hypotheses $(H)$ and $(H_*)$. Then, there exists $\epsilon_*>0$ such that for all $\epsilon\in(0,\epsilon_*]$, there exists ${r}>0$ such that for any function $H$ satisfying $\max\{||H||_B,||dH||_B\}<{r}$, there exists an open set $C_{r}$ that is positively $\Phi^t$-invariant, and for all $Z\in C_{r}$, there exists a diffeomorphism $t\to \mu=\mu_Z(t)$ such that the linearized system \eqref{Chap4lin} is equivalent to the following linear system:
\begin{gather}
\label{Chap4psifinal} \frac{d}{d\mu}Y^*(\mu)=[b(\mu)I_N+\mathcal{A}(\mu)+\zeta_Z(\mu)] Y^*(\mu),\\
\notag \mu\geq\mu_Z(t_0),\quad Y^*(\mu)=Y(\tau_Z(\mu)),
\end{gather}
where $\tau_Z :\mathbb{R}\to\mathbb{R}$ is the inverse function of $\mu(t)$, $\zeta_Z(\mu)$ is a normalizing matrix satisfying $||\zeta_Z||<\epsilon$, $\mathcal{A}(\mu)=\{a_{i,j}(\mu)=a_{j}(\mu)\}_{1\le i,j\le N}$ is a rank-1 matrix, and $b(\mu)$ is a function defined as follows:
\[
a_{j}(\mu)=\frac{\partial_j F(\mu\mathbbm{1},\mu)}{{F}(\mu\mathbbm{1},\mu)},\quad1 \le j\le N\quad\text{and}\quad b(\mu) = \frac{\partial_{N+1}{F}(\mu\mathbbm{1},\mu)}{{F}(\mu\mathbbm{1},\mu)},
\]
satisfying
\begin{equation*}
\int_{0}^{1}b(s)+\sum_{j=1}^{N}a_j(s)ds=0,\quad\text{and}\quad \int_{0}^{1}b(s)ds<0,
\end{equation*}
meaning that the stability hypothesis $(H_{stab})$ is satisfied.
\end{lemma}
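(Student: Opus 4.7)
The first step is to apply Theorem I to extract, for all small enough $r>0$, the open positively $\Phi^t$-invariant set $C_r$ inside which every orbit of \eqref{Chap4SystemGeneral1} satisfies $|\Phi^t_i(Z)-\Phi^t_j(Z)|<2D$ and $\dot\Phi^t_i(Z)>0$ uniformly in $i,t$. By $(H)$ we have $\min_{s\in[0,1]}F(s\mathbb{1},s)>0$, and $F$ is continuous and $\mathbb{1}$-periodic, so after possibly shrinking $D$ there are constants $0<c_-<c_+<\infty$ with $c_-\le F(Y,z)\le c_+$ whenever $\max_i|y_i-z|\le 2D$. Hence the solution $\mu_Z(t)$ of \eqref{Chap2position} satisfies $\dot\mu_Z\ge c_->0$, so $t\mapsto \mu_Z(t)$ is a $C^1$ diffeomorphism of $[t_0,+\infty)$ onto $[\mu_Z(t_0),+\infty)$, with inverse $\tau_Z$. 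Up to restricting $r$ further, the same bounds apply to $F_i=F+H_i$ because $\|H\|_B<r$.

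\textbf{Rewriting the linearisation.} Starting from \eqref{Chap4lin} and \eqref{Chap4element_jacobienne}, the plan is to perform the time change $Y^*(\mu):=Y(\tau_Z(\mu))$. Since $d\mu/dt=F(\Phi^t(Z),\mu_Z(t))$, we obtain
\[
\frac{dY^*}{d\mu}=\frac{1}{F(\Phi^t(Z),\mu_Z(t))}\,d\mathcal F(\Phi^t(Z))\,Y^*(\mu),
\]
where $t=\tau_Z(\mu)$. For each entry we add and subtract its value at the diagonal synchronized point $(\mu\mathbb{1},\mu)$, writing
\[
\frac{g_{ij}(t)}{F(\Phi^t(Z),\mu_Z(t))}=\frac{\partial_j F(\mu\mathbb{1},\mu)}{F(\mu\mathbb{1},\mu)}+\zeta_{Z,ij}(\mu)\quad(i\neq j),
\]
and similarly on the diagonal one recovers the extra term $b(\mu)=\partial_{N+1}F(\mu\mathbb{1},\mu)/F(\mu\mathbb{1},\mu)$. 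Summed, the ``main'' part is $b(\mu)I_N+\mathcal A(\mu)$ with $\mathcal A_{ij}(\mu)=a_j(\mu)$, which is a rank--$1$ matrix, precisely of the form required by \eqref{Chap4psifinal}.

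\textbf{Smallness of $\zeta_Z$ and verification of $(H_{stab})$.} Each $\zeta_{Z,ij}(\mu)$ decomposes into two contributions. The first comes from the perturbation: terms $\partial_j H_i/F$ (or, on the diagonal, $(\partial_i H_i+\partial_{N+1}H_i)/F$), bounded by $\|dH\|_B/c_-\lesssim r/c_-$. The second is the quantity $\partial_j F(\Phi^t(Z),z_i)/F(\Phi^t(Z),\mu)-\partial_j F(\mu\mathbb{1},\mu)/F(\mu\mathbb{1},\mu)$; using $\|d^2F\|_B<\infty$, $\max_i|\Phi^t_i(Z)-\mu_Z(t)|\le 3D$, and $F\ge c_-$, this is controlled by a constant multiple of $D$. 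Fixing $\epsilon_*>0$ and then choosing $D,r$ sufficiently small forces $\|\zeta_Z\|<\epsilon$ uniformly in $Z\in C_r$. For the integral conditions in $(H_{stab})$, the identity
\[
b(s)+\sum_{j=1}^N a_j(s)=\frac{1}{F(s\mathbb{1},s)}\Big(\partial_{N+1}F(s\mathbb{1},s)+\sum_{j=1}^N\partial_j F(s\mathbb{1},s)\Big)=\frac{d}{ds}\log F(s\mathbb{1},s)
\]
together with $\mathbb{1}$-periodicity $F(\mathbb{1},1)=F(0,0)$ gives $\int_0^1 (b+\sum_j a_j)\,ds=0$, while $\int_0^1 b(s)\,ds<0$ is exactly $(H_*)$. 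Periodicity of $a_j$ and $b$ is inherited from $F$.

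\textbf{Normalizing character of $\zeta_Z$.} To close the argument, note that differentiating \eqref{Chap4SystemGeneral1} in $t$ shows $Y(t):=(\dot\Phi^t_1(Z),\ldots,\dot\Phi^t_N(Z))^T$ is a solution of \eqref{Chap4lin}. After the time change, the candidate normalizing solution is $V(\mu):=Y(\tau_Z(\mu))$, whose components are $F(\Phi^{\tau_Z(\mu)}(Z),z_i(\tau_Z(\mu)))+H_i(\cdots)$; by the Setup step these are simultaneously bounded above by $c_++r$ and below by $c_--r>0$, so $\inf_\mu\|V(\mu)\|>0$ and $\sup_\mu\|V(\mu)\|<\infty$. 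Hence $\zeta_Z$ is normalizing in the sense of Definition \ref{normalizingproof}. The main technical obstacle I anticipate is the uniform bookkeeping of the two estimates in step three: one must keep the denominators $F(\Phi^t(Z),\mu_Z(t))$ away from zero independently of $Z\in C_r$ and aggregate the Taylor remainders so that the resulting bound on $\|\zeta_Z\|$ depends only on $D$ and $r$, not on the particular base orbit.
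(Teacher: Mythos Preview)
Your argument is correct and follows essentially the same route as the paper's proof: invoke Theorem~I for the invariant tube and the diffeomorphism $\mu_Z$, perform the time change $t\to\mu$, split $d\mathcal F(\Phi^t(Z))/\dot\mu_Z$ into the diagonal part $b(\mu)I_N+\mathcal A(\mu)$ plus a small remainder $\zeta_Z$ controlled by $D$ and $r$, verify $(H_{stab})$ via the logarithmic-derivative identity and $(H_*)$, and exhibit $V(\mu)=\dot\Phi^{\tau_Z(\mu)}(Z)$ as the normalizing solution. The only cosmetic difference is that the paper isolates the denominator mismatch through an auxiliary quantity $\theta(t)$ defined by $F(\mu_Z\mathbb{1},\mu_Z)/\dot\mu_Z=1+\theta(t)$ before assembling $\zeta_Z$, whereas you subtract the diagonal value directly; the resulting bounds are the same.
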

\begin{proof}
Appendix. D
\end{proof}

\subsection{Proof of Main Result {(I)} and {(II)}: Stability}
The proof of Main Result {{I}} and  {{II}} relies on a perturbative method as follows:

\begin{proof}

Consider the coupled non-linear system \eqref{Chap4SystemGeneral1}, and suppose that $F$ satisfies hypotheses $(H)$ and $(H_*)$. According to Lemma \ref{lemmefinalstabilitynonlinear}, there exists $\epsilon_*>0$ such that for all $\epsilon\in(0,\epsilon_*]$, there exists ${r}>0$ such that for any function $H$ satisfying $\max\{||H||_B,||dH||_B\}<{r}$ for $Z\in C_{r}$, the system \eqref{Chap4lin} is equivalent to the system \eqref{Chap4psifinal} with $b(t)$ and $\mathcal{A}$ satisfying the stability hypothesis $(H_{stab})$, and $\zeta$ being a normalizing matrix satisfying $||\zeta_Z||<\epsilon$.

So, let $Z\in C_{r}$, and let $R_Z(\mu;\mu_Z)$ be the fundamental matrix of the system \eqref{Chap4psifinal}, where $\mu$ is the change of variable $t\to\mu_Z(t)$ with $\mu_Z(t)$ being a solution of the system \eqref{Chap2position} associated with $\Phi^t(Z)$ and with initial condition $\mu_Z:=\mu_Z(t_0)\in\mathbb{R}$ as defined in definition \ref{SPN}. According to the linear result {{I}}$^l$, we deduce that there exist $\beta>0$ and $K>0$, and there exists ${r}_*>0$ such that for a function $H$ satisfying $\max\{||H||_B,||dH||_B\}<{r}_*$ and for all $Z\in C_{{r}_*}$, there exists a linear form $\mathcal{L}_{\mu_{Z}} : \mathbb{R}^N\to\mathbb{R}$ such that for all $Y\in \mathbb{R}^N$ and for all $\mu\geq \mu_Z$, we have
\begin{description}
\item[$\bullet$] $\mathcal{L}_{\mu_{Z}}(R_Z(\mu;\mu_Z)V^*_Z(\mu_Z))=1$ and $\mathcal{L}_{\mu_{Z}}(Y)<K ||Y||$,
\item[$\bullet$] $\mathcal{L}_{\mu_{Z}}(Y) =\mathcal{L}_{\mu_{Z}}(R_Z(\mu;\mu_Z)Y)$,
\item[$\bullet$] $||R_Z(\mu;\mu_Z)  [Y-\mathcal{L}_{\mu_{Z}}(Y)V^*_Z(\mu_Z)]||<K ||Y||\exp(-\beta (\mu-\mu_Z))$.
\end{description}
Furthermore, the solution $Y^*(\mu)$ with initial condition $Y^*(\mu_Z)$ can be written as follows:
\begin{align*}
R_Z(\mu;\mu_Z)Y^*(\mu)&=\mathcal{L}_{\mu_{Z}}(Y^*(\mu_Z))V_Z^*(\mu)\\
&+R_Z(\mu;\mu_Z) [Y^*(\mu_Z)-\mathcal{L}_{\mu_{Z}}(Y^*(\mu_Z))V_Z^*(\mu_Z)].
\end{align*}
Let $S_{Z}(t;t_0)$ be the fundamental matrix of the system \eqref{Chap4lin}. Since the solution $Y(t)$ of the system \eqref{Chap4lin} with initial condition $Y(t')=Y$ satisfies $Y(t)=Y(\tau_Z(\mu))=Y^*(\mu)$, it follows that $Y^*(\mu_Z)=Y$ where $\mu\to\tau_Z(\mu)$ is the inverse of $t\to\mu_Z(t)$, then by the change of variable $\mu:=\mu_Z(t)\to t$, we obtain that $Y(t)$ can be written as follows:
\begin{align}
\nonumber Y(t)=S_{Z}(t;t_0)Y=Y^*(\mu)&=R_Z(\mu_Z(t);\mu_Z(t_{0}))Y\\
\label{resolvantestabilitevariablet}  &=\mathcal{L}_{\mu_{Z}}(Y)V_Z(t)\\
\notag &+R_Z(\mu_Z(t);\mu_Z(t_{0})) [Y-\mathcal{L}_{\mu_{Z}}(Y)V_Z(t)],
\end{align}
for all $ t\geq t_{0}$, where $V_Z(t)=V^*_Z(\mu_Z(t))=\frac{d}{dt}\Phi^t(Z)$. On the other hand, the system \eqref{Chap4SystemGeneral1} can also be written in the form:
\[
\frac{d}{dt}\Phi^{t}(Z)={F}(\Phi^{t}(Z))\implies \frac{d}{dt}{d \Phi^{t}(Z)}=d{F}(\Phi^{t}(Z)) {d \Phi^{t}(Z)}.
\]
Therefore, $d \Phi^{t}(Z)=S_Z(t;t_0)$.\\
\textit{Item 1. {Stability}}:

Let $X,Y\in C_{{r}_*}$ such that the segment $z(s)=(1-s)X+sY$ satisfies $z(s)\in C_{{r}_*}$ for all $s\in[0,1]$. We have:
\begin{align*}
\Phi^{t}(Y)-\Phi^{t}(X) &=\int_{0}^{1}{\frac{d \Phi^{t}(z(s))}{d s}}ds=\int_{0}^{1}{{d \Phi^{t}(z(s))}}ds (Y-X) \\
& = \int_{0}^{1}S_{z(s)}(t;t_0)ds (Y-X).
\end{align*}
According to the decomposition \eqref{resolvantestabilitevariablet} and knowing that $||V_{z(s)}||<||F||_B+ ||H||_B$, we obtain:
\begin{align*}
&||\Phi^{t}(Y)-\Phi^{t}(X) ||\le\Big|\Big|\int_{0}^{1}\mathcal{L}_{\mu_{z(s)}}(Y-X)V_{z(s)}(t)ds\Big|\Big|\\
&+\Big|\Big|\int_{0}^{1}R_{z(s)}(\mu_{z(s)}(t);\mu_{z(s)}(t_{0}))\Big{[}(Y-X)-\mathcal{L}_{\mu_{z(s)}}(Y-X)V_{z(s)}\Big{]}ds\Big|\Big|\\
&\le K [||F||_B+||H_B||]||Y-X||+K\exp\Big{(}-\beta(\mu_{z(s)}(t)-\mu_{z(s)}(t_0))\Big{)}||Y-X||\\
&\le K [||F||_B+||H_B||+1]||Y-X||,\quad\forall t\geq t_0.
\end{align*} 
\textit{Item 2. {Exponentially Stable Submanifold}}:

Let $X\in C_{{r}_*}$. In the following, we show that there exists an exponentially stable submanifold at $X$. The idea is to find trajectories of the form $z(s)$ connecting $z(0):=X$ and $z(1):=Y\in C_{r}$ such that for all $s\in [0,1]$, the quantity $\frac{d z(s)}{ds}$ lies in the kernel of the linear form $\mathcal{L}_{\mu_{z(s)}}$. This allows us to use equation \eqref{resolvantestabilitevariablet} and cancel the term that does not exponentially decay to zero as follows:
\begin{align}
\nonumber&||\Phi^{t}(Y)-\Phi^{t}(X)| =||\int_{0}^{1}{{d \Phi^{t}(z(s))}}ds \frac{d z(s)}{s}|| \\
\nonumber&\le  ||\int_{0}^{1}\mathcal{L}_{\mu_{z(s)}}(\frac{d z(s)}{s})V_{z(s)}(t)ds ||\\
\nonumber&+ || \int_{0}^{1}R_{z(s)}(\mu_{z(s)}(t);\mu_{z(s)})\Big{(}(\frac{d z(s)}{s})-\mathcal{L}_{\mu_{z(s)}}(\frac{d z(s)}{s})V_{z(s)}\Big{)}ds||\\
\nonumber&=|| \int_{0}^{1}R_{z(s)}(\mu_{z(s)}(t);\mu_{z(s)})\Big{(}(\frac{d z(s)}{s})-\mathcal{L}_{\mu_{z(s)}}(\frac{d z(s)}{s})V_{z(s)}\Big{)}ds||\\
\label{derniereinegaliteexponstable}&\le\exp(-\frac{\beta}{\alpha-L D} (t-t_0))\max_{s\in[0,1]}||\frac{d z(s)}{ds}||,
\end{align}
where $\alpha-L D>0$ since $\dot{\mu}_Z>\alpha-L D$ as given in the proof of the previous Lemma \ref{lemmefinalstabilitynonlinear}. To ensure that $\frac{d z(s)}{ds}$ lies in the kernel of the linear form $\mathcal{L}_{\mu_{z(s)}}$, it is sufficient that $z(s):=z(\xi,s)$ satisfies the following equation:
\begin{equation}\label{equationlineairezsxi}
\frac{d}{ds}z(\xi,s)=\xi-\mathcal{L}_{\mu_{z(\xi,s)}}(\xi)V_{z(\xi,s)}(t_{0}),
\end{equation}
with initial condition $Z(\xi,0)=X$ where $\xi\in\mathbb{R}^N$. For $||\xi||\approx0$, the solution $z(\xi,s)$ exists for all $s\in[0,1]$. In the rest of this proof, we consider vectors $\xi$ with norms close to zero such that $z(\xi,s)\in C_{{r}_{*}}$ for all $s\in[0,1]$. As $\mathcal{L}_{\mu_{Z}}(V_{z(\xi,s)}(t_{0}))=1$, we eventually obtain $\mathcal{L}_{\mu_{z(\xi,s)}}(\xi)(\frac{d}{ds}z(\xi,s))=0$, which implies equation \eqref{derniereinegaliteexponstable} and concludes the proof. We consider vectors $\xi \in \mathbb{R}^{N-1}$ in the hyperplane defined by $\mathcal{L}_{\mu_{X}}(\xi)=0$. We now show that the map $\xi \to \frac{d}{d\xi}z(\xi,1)\approx Id$ with $Id :\mathbb{R}^{N-1}\to\mathbb{R}^N$ is the identity matrix, i.e., a $C^1$-diffeomorphism, which defines the exponentially stable submanifold. We have:
\begin{align}
\notag\frac{d}{d\xi}\frac{d}{ds}z(\xi,s)&=Id_N -\frac{d}{d\xi}\Big{[}\mathcal{L}_{\mu_{z(\xi,s)}}(\xi)\Big{]}V_{z(\xi,s)}(t_{0})\\
\label{deriveunxi}&-\mathcal{L}_{\mu_{z(\xi,s)}}(\xi)\frac{d}{d\xi}\Big{[}V_{z(\xi,s)}(t_{0})\Big{]}.
\end{align}
The differentiability of the flow with respect to each variable implies that the linear form $\mathcal{L}_{\mu_{Z}}$ is differentiable with respect to $Z$. Since $\mathcal{L}_{\mu_{Z}}$ is linear with respect to $\xi$, by Taylor's theorem, there exists a matrix $\Pi$ of size $N-1\times N$ and $s'\in[0,s]$ such that:
\begin{align*}
\mathcal{L}_{\mu_{z(\xi,s)}}(\xi)&=\mathcal{L}_{\mu_{X}}(\xi)+\Pi\xi(z(\xi,s)-X)\\
&=\Pi\xi(z(\xi,s)-Z(\xi,0))=\Pi\xi\frac{d}{ds}z(\xi,s')s.
\end{align*}
Therefore, $\mathcal{L}_{\mu_{z(\xi,s)}}(\xi)$ is of second order with respect to $\xi$ and has the form: $\mathcal{L}_{z(\xi,s)}(\xi)\approx  \xi^2$. For $\xi\approx0$, there exists $M>0$ such that $||\frac{d}{d\xi}\mathcal{L}_{z(\xi,s)}(\xi)||<M||\xi||$. Considering that:
\[
\frac{d}{d\xi}\Big{[}V_{z(\xi,s)}(t_{0})\Big{]}=\frac{d}{d\xi}{F}(z(\xi,s))=d{F}(\Phi^t(z(\xi,s)))\frac{d}{d\xi}z(\xi,s),
\]
and as $||d{F}(\Phi^t(z(\xi,s)))||<L$, we can obtain $||\frac{d}{d\xi}z(\xi,s)||<M'$. Finally, there exists a constant $M''>0$ such that equation \eqref{deriveunxi} satisfies:
\[
||\frac{d}{d\xi}\frac{d}{ds}z(\xi,s)-Id_N||<M''\xi,
\]
for $\xi\approx0$. For such $\xi$, the function:
\[
z(\xi,1)=X+\int_{0}^{1}\frac{d}{d\xi}\frac{d}{ds}z(\xi,s)ds,
\]
is a $C^1$-diffeomorphism, which defines an exponentially stable submanifold $\mathcal{W}_{stab}$ of dimension $N-1$ such that the points in $\mathcal{W}_{stab}$ satisfy equation \eqref{derniereinegaliteexponstable}, i.e., for all $Y \in \mathcal{W}_{stab}$:
\[
||\Phi^{t}(Y)-\Phi^{t}(X)|| <K\exp(-\frac{\beta}{\alpha-L D} (t-t_0)),
\]
for all $t\geq t_0$, with $K>0$ being an upper bound of $K(\xi):=\max_{s\in[0,1]}||\frac{d }{ds}z(s,\xi)||$ for $\xi\approx0$, and $Y=z(\xi,1)$.
\end{proof}

  \section{Conclusion} 
In conclusion, the study of linear systems perturbed by a time-dependent matrix \(\zeta\) has established that \(\mathbb{R}^N\) can be decomposed into a direct sum of subspaces \(\mathbb{R} \oplus \mathcal{W}\), where \(\mathcal{W}\) is an exponentially stable manifold of dimension \(N-1\). For cases where the perturbing matrix \(\zeta\) is normalizing, we have derived information about the norm of the associated linear form. By relating these results to the stability of the nonlinear mean-field systems, we have demonstrated that the synchronization hypothesis and the stability hypothesis are closely related. Specifically, for the coupled system satisfying the synchronization hypothesis, linearization around synchronized orbits allows us to verify the stability hypothesis, showing the stability and exponential stability of the coupled system. In particular, when the perturbation of the coupled system is \(\mathbbm{1}\)-periodic, the exponential stability of the periodic orbit implies the existence of a stable limit cycle.

\appendix
\section*{Appendix. A}  
 \begin{proof}[Proof of the Lemma \ref{Chap3lemmepartieexpenontielle}] Let $Z(t) = (Z^*(t)^T, z_{N+1}(t))^T$, where $Z^*(t) = (z_{1}(t),\ldots,z_{N}(t))^T$ is a solution to the linear system \eqref{Chap3equation:zi} with the initial condition $Z(t') = Z\in\mathbb{R}^N$. Suppose that $z_{N+1}(T) = 0$. By integrating \eqref{Chap3equation:zi}, we obtain
\begin{equation}\label{Chap3valeurzN+1}
 z_{N+1}(t') = -\int_{t'}^{T}\langle A_*(s),Z^*(s)+e(s,t')Y \rangle P(t',s)ds.
\end{equation}
Let $\beta<\alpha$. Let $C={||Z||+||Y||}$, so for all $M>1$, we have $||Z||<MC$, and there exists $\epsilon>0$ such that $||Z^*(t)||<MC\exp(-\beta (t-t'))$
for every $t\in(t', t'+\epsilon)$. Define,
\[
T_{*}=\sup\Big\{t> t':\quad  ||Z^*(s)||<MC\exp(-\beta (s-t')),\quad \forall s\in(t',t)\Big\},
\]
Then, we have
\begin{equation}\label{Chap3approche:zim}
||Z^*(t)||<MC\exp(-\beta (t-t'))\quad\forall t\in [t',T_*[.
\end{equation}
The strategy is to find a particular constant $M$ such that $T_*\geq T$. By contradiction, assume that $T_{*}<T$. By integrating \eqref{Chap3equation:zi} and using \eqref{Chap3valeurzN+1}, we obtain for all $t\in [t',T_*[$
\begin{align}
\nonumber&|z_{N+1}(t)| =\Big{|}P(t,t')\Big{[}z_{N+1}(t')+\int_{t'}^{t}\langle A_*(s),Z^*(s)+e(s,t')Y \rangle P(t',s)ds\Big{]}\Big{|}\\
\nonumber&=P(t,t')\Big{|}-\int_{t}^{T}\langle A_*(s),Z^*(s)+e(s,t')Y \rangle P(t',s)ds\Big{|}\\
\nonumber&<\exp(c_b+c_a)c_aC\int_{t}^{T}M\exp(-(\beta s-t'))+\exp(-\alpha (s-t')+c_b)ds\\
\label{Chap3majorationzN+1m}&<\exp(2c_b+c_a)c_aC\frac{2M}{\beta}\exp(-\beta (t-t') ).
\end{align}
Let $\zeta$ be a continuous matrix with $||\zeta||=D<D_*$, then from equation \eqref{Chap3equation:zi}, we have the following two inequalities for $z_{i}(t)$ and $-z_{i}(t)$
 \begin{align}
 \nonumber\pm \frac{d}{dt}{z}_{i}(t)&<b(t)(\pm z_{i}(t))+DC[\exp(2c_b+c_a)c_a\frac{2M}{\beta}\\
\notag&+M+\exp(c_b))]\exp(-\beta (t-t'))\\
\notag&<b(t)(\pm z_{i}(t))+DMC[\exp(2c_b+c_a)\frac{2c_a}{\beta}\\
\label{Chap3normezt'm}&+1+\exp(c_b))]\exp(-\beta (t-t')).
\end{align}  
 Let $z_{i}(t)=\Delta_{i}(t)\exp(-\beta (t-t'))MC,\quad t\in [t',T_{*}[$.  We have $|\Delta_{i}(t)|\le1$ on $[t',T_{*}[$. Substituting into the last equation, we have
 \begin{align*}
\frac{d}{dt}(\pm\Delta_{i}(t))<[b(t)+\beta](\pm \Delta_{i}(t))+D[\exp(2c_b+c_a)\frac{2c_a}{\beta}+1+\exp(c_b))].
\end{align*} 
By definition of $T_{*}$ there exists $i\in \{1,\ldots,N\}$ such that $|\Delta_{i}(T_{*})|=1$. We will use Lemma \ref{Chap3dispersionstabilite} to obtain a contradiction.\\
Lemma \ref{Chap3dispersionstabilite} implies that for $||\zeta||=D<D_*:=D_0$, there exists a strictly positive $1$-periodic function of the equation
\[
\frac{d}{dt}\Delta(t)=[b(t)+\beta]\Delta(t)+D[\exp(2c_b+c_a)\frac{2c_a}{\beta}+1+\exp(c_b))],
\]
such that $\max_{t\in [0,1]}\Delta(t)<1$. Let $M>1$ be such that $\frac{1}{M}<\Delta(t')$; thus, $\Delta_{i}(t')\le\frac{||Z||}{MC}\le\frac{1}{M}<\Delta(t')$. There exists $\epsilon'>t'$ such that $|\Delta_{i}(t)|<\Delta(t)$ on $[t',\epsilon'[$; let $T_{0}=\sup\{t'\le s\le T_* : |\Delta_{i}(s)|<\Delta(s)\}$. If $t'<T_{0}<T_{*}$, then $|\Delta_{i}(T_{0})|=\Delta(T_{0})$. Without loss of generality, assume that $\Delta_{i}(T_{0})=\Delta(T_{0})$, then we obtain
\begin{align*}
\frac{d}{dt}|\Delta_{i}(T_{0})|&<[b(t)+\beta]|\Delta_{i}(T_{0})|+D[\exp(2c_b+c_a)\frac{2c_a}{\beta}+1+\exp(c_b))]\\
&=[b(t)+\beta]\Delta(T_{0})+D[\exp(2c_b+c_a)\frac{2c_a}{\beta}+1+\exp(c_b))]\\
&=\frac{d}{dt}\Delta(T_{0}).
\end{align*}
Contradiction. Thus, $T_{0}>T_{*}$, which implies $\Delta_{i}(T_{*})<1$, contradicting the definition of $T_{*}$. Therefore, for all $t\in[t',T]$, we have \[||Z^*(t)||<L \exp(-\beta (t-t'))[||Z(t')||+||Y||]\] for some constant $L =M\max(1,2\exp(2c_b+c_a)c_a)$.
\end{proof}

\section*{Appendix. B}  
\begin{proof}[Proof of the Lemma \ref{Chap3lemmeprincipalestabilite} ]
Let $Z_m(t)$ and $(t_m)_m$ satisfy the assumptions of this lemma. Lemma \ref{Chap3lemmepartieexpenontielle} implies that
\[
||Z_m(t)||<L\exp(-\beta (t-t'))[||Y||+||Z_m(t')||], \quad \forall t \in [t',t_m].
\]
The idea is to show that there exists $C>0$ such that $||Z_m(t')||<C||Y||$ and then extract a convergent subsequence of $Z_{m}(t)$. Let $\zeta$ be a continuous matrix with norm $||\zeta||=D<D_*$. By integrating \eqref{Chap3equation:zi} for all $t\in [t',t_m]$ and all $i=1,\ldots, N$, we have
\[
z_{i,m}(t)=e(t,t')z_{i,m}(t')+F_{i,m}(t),
\]
implies,
\begin{align*}
|F_{i,m}(t)|&=|e(t,t')\int_{t'}^{t}<\zeta_i(s),z_{N+1,m}(s)\mathbbm{1}+Z^*_m(s)+e(s,t')Y \rangle e(t',s)ds|\\
&<D(2L+\exp(c_b)) [||Y||+||Z_m(t')||]e(t,t')\int_{t'}^{t}\exp((\alpha-\beta) (s-t'))ds\\
&<D(2L+\exp(c_b))\exp(c_b) \frac{||Y||+||Z_m(t')||}{\alpha-\beta}\exp(-\beta (t-t')).
\end{align*}
By integrating \eqref{Chap3equation:zi}, we get $z_{N+1,m}(t_m) =0$ if and only if
\begin{align*}
 -\int_{t'}^{t_{m}}&\langle A_*(s),e(s,t')Y \rangle P(t',s)ds=\int_{t'}^{t_{m}}\sum_{i=1}^N [a_j(s)F_{i,m}(s)]P(t',s)ds\\
 & + z_{i,m}(t')\int_{t'}^{t_{m}}\langle A_*(s),\mathbbm{1} \rangle \exp(-\int_{t'}^{s}\langle A_*(x),\mathbbm{1} \rangle dx)ds.
\end{align*}
We deduce that
\begin{align*}
 &|z_{i,m}(t')|\Big{|}1-\exp(-\int_{t'}^{t_{m}}\langle A_*(x),\mathbbm{1} \rangle dx)\Big{|}\\
& -c_a D(2L+\exp(c_b))\exp(c_b) \frac{||Y||+|z_{t',m}|}{\alpha-\beta}\int_{t'}^{t_{m}}\exp(-\beta (s-t'))P(t',s)ds\\
&<|\int_{t'}^{t_{m}}\langle A_*(s),e(s,t')Y \rangle P(t',s)ds|.
\end{align*}
For $D\approx0$ and $m\to+\infty$, we will have
\[
|z_{i,m}(t')|<\frac{\exp(2c_b+c_a)}{1-c_aD(2L+c_b)\exp(2c_b+c_a) \frac{1}{\beta(\alpha-\beta)}}\Big{(} D\frac{c_a (2L+\exp(c_b))}{\beta(\alpha-\beta)}+\frac{c_a}{\alpha}\Big{)}||Y||,
\]
which implies that $(||Z_m(t)||)_m$ is uniformly bounded on each interval $[t',t_m]$. Furthermore,
\[
||Z_m(t)||<L\exp(-\beta (t-t'))[||Y||+||Z_m(t')||]<K\exp(-\beta (t-t'))||Y|| ,
\]
where
\[
K = L\Big{[}1+\frac{\exp(2c_b+c_a)\Big{(} D\frac{c_a (2L+\exp(c_b))}{\beta(\alpha-\beta)}+\frac{c_a}{\alpha}\Big{)}}{1-c_aD(2L+c_b)\exp(2c_b+c_a) \frac{1}{\beta(\alpha-\beta)}}\Big{]}.
\]
Therefore, we can extract a convergent subsequence that converges to a solution $Z_Y(t)$ of \eqref{Chap3equation:zi} and satisfies $
||Z_m(t)||<K\exp(-\beta (t-t'))||Y||$.
\end{proof}
\section*{Appendix. C}  
\begin{proof}[Proof of the Lemma \ref{Chap3lemme:formelineaires}]
From equation \eqref{Chap3equationintegralezN+1}, we deduce that $Z(t) = (S^*(t;t')W - z_{N+1}(t), z_{N+1}(t))$ is a solution of the linear homogeneous system \eqref{Chap3systeme:homogenezi} with the initial condition $W$, where $z_{N+1}(t)$ satisfies
\[
z_{N+1}(t) = P(t,t')H(t,t'), \quad t \geq t'.
\]
So, $Z(t) = (S^*(t;t')W - z_{N+1}(t), z_{N+1}(t))$ is, in particular, a solution of the nonhomogeneous linear equation \eqref{Chap3equation:zi} with $Y=0$ and satisfies the assumptions of Lemma \ref{Chap3lemmeprincipalestabilite}. Since $Y=0$, we have $||Z(t)|| \equiv 0$, which contradicts the initial condition $Z(t') = W$.
\end{proof}
\section*{Appendix. D}  
\begin{proof}[Proof of the Lemma \ref{lemmefinalstabilitynonlinear}] 
We consider the system \eqref{Chap4SystemGeneral1}. Suppose that $F$ satisfies hypotheses $(H)$ and $(H_*)$. Let $D\approx0$, then according to the Theorem  (I), there exists ${r}>0$ such that for a $C^1$  matrix $H$ satisfying $||H||_B<{r}$, there exists an open set $C_{r}$ that is positively $\Phi^t$-invariant, such that for all $t\geq t_0$ and all $Z\in C_{r}$, we have
\[
\max_{j}|\Phi_{j}^t(Z)-\mu_Z(t)|<D\quad\text{and}\quad\max_{i,j}|\Phi_i^t(Z)-\Phi_j^t(Z)|<2D,
\]
where $\mu_Z(t)$ is a solution of the system \eqref{Chap2position} associated with $\Phi^t(Z)$ and with the initial condition $\mu_Z(t_0)\in\mathbb{R}$. As long as $Z\in C_{r}$, note that   the flow $\Phi^t(Z)$ and the solution $\mu_Z(t)$ are defined for all $t\geq t_0$.

Consider the linearized system \eqref{Chap4lin} at the point $\Phi^t(Z)$. For any $1\le i,j\le N$, define $U_Z(t)=\{u_{i,j}^Z(t)\}_{1\le i, j \le N}$ as the matrix of order $N$ given by
\begin{equation*}
\left\{
\begin{aligned}
u_{ii}^Z(t)&= \Big{[}\partial_{N+1}H_i(Z,z_i)+[\partial_{N+1}{F}(Z,z_i)-\partial_{N+1}{F}(\mu_Z(t)\mathbbm{1},\mu_Z(t))]\Big{]}\\
&+\partial_i H_i(Z,z_i)+[\partial_i {F}(Z,z_i)-\partial_i {F}(\mu_Z(t)\mathbbm{1},\mu_Z(t))]\\
u_{i,j}^Z(t)&=\partial_j H_i(Z,z_i)+[\partial_j {F}(Z,z_i)-\partial_j {F}(\mu_Z(t)\mathbbm{1},\mu_Z(t))],
\end{aligned}
\right.
\end{equation*}
so that
\[
 \dot{Y}(t)=[d\mathcal{F}(\mu_Z(t)\mathbbm{1})+U_Z(t)] Y(t).
\]
Or equivalently,
\begin{align}\label{equationYnonfinal1}
\dot{y}_{i}(t)&=\partial_{N+1}{F}(\mu_Z(t)\mathbbm{1},\mu_Z(t))y_{i}(t)\\
\notag&+\sum_{j=1}^{N}[\partial_j F(\mu_Z(t)\mathbbm{1},\mu_Z(t))+u_{ij}^Z(t)]y_{j}(t).
\end{align}

Let $L>0$ and $\alpha>0$ be defined as follows:
\begin{equation}\label{Lalphadernierchapitre}
L:=\sum_{0 \le i \le 2}||d^iF||_B,\quad\text{and}\quad\alpha:=\min_{s\in [0,1]}F(s\mathbbm{1},s).
\end{equation}
Assume that $\max\{||H||_B,||dH||_{B}\}<{r}$, then for all $1\le i,j\le N$, we have
\begin{equation}\label{normeU}
||u_{i,i}^Z||<2[{r}+L D],\quad\text{and}\quad||u_{i,j}^Z||<{r}+L D.
\end{equation}
For $D\approx0$, we have $\dot{\mu}_Z>\alpha-L D>0$. In particular, $\mu_Z(t)$ is a diffeomorphism, and there exists $L>0$ such that
\begin{gather*}
\frac{F(\mu_Z\mathbbm{1},\mu_Z)}{\dot{\mu}_Z}=1+\theta(t),\\
|\theta(t)|=\big{|}\frac{F(\mu_Z\mathbbm{1},\mu_Z)-F(\Phi^t(Z),\mu_Z)}{\dot{\mu}_Z}\Big{|}<\frac{LD}{\alpha-L D}.
\end{gather*}
Let $\mu\to \mu=\mu_Z(t)$ be the change of variable and define $Y^*(\mu)=Y( \tau_Z(\mu))$, where $\tau_Z(\mu):=\mu_{Z}^{-1}(\mu)$ is the inverse function of $\mu(t)$. We can use the following equation in \eqref{equationYnonfinal1}:
\[
\frac{d}{dt}Y(t)=\frac{d}{ds}Y^*(s)\frac{d}{dt}\mu_Z(t)=\frac{d}{ds}Y^*(s)\frac{F(\mu_Z(t)\mathbbm{1},\mu_Z(t))}{1+\theta(t)}.
\]

Taking into account equations \eqref{normeU} and \eqref{equationYnonfinal1}, there exists a matrix $\zeta_Z(\mu)=\{\zeta_{ij}^Z(\mu)\}_{1\le i,j\le N}$ that depends on $\mu\in \mathbb{R}$, defined as
\[
\zeta_{ij}^Z(\mu)=d\mathcal{F}(\Phi^t(Z))\frac{\theta(t)}{F(\mu\mathbbm{1},\mu)}+u_{i,j}^Z(t)\frac{1}{F(\mu\mathbbm{1},\mu)},
\]
and satisfying
\[
||\zeta_Z||<(L+{r})\frac{||\theta||}{\alpha}+({r}+L D)\frac{1}{\alpha}<\epsilon:=(L+{r})\frac{LD}{\alpha-L D}+({r}+L D)\frac{1}{\alpha},
\] 
such that the system \eqref{Chap4lin} is equivalent to
\begin{align*}
\frac{d}{d\mu}{y}_{i}^*(\mu)=\frac{\partial_{N+1}{F}(\mu\mathbbm{1},\mu)}{{F}(\mu\mathbbm{1},\mu)}y_i^*(\mu)+\sum_{j=1}^{N}[\frac{\partial_j F(\mu\mathbbm{1},\mu)}{{F}(\mu\mathbbm{1},\mu)}+\zeta_{ij}^Z(\mu)]y_j^*(\mu),
\end{align*}
with $\mu\geq \mu_Z=\mu_Z(t_0)$. It can be observed that
\[
V_Z^*(\mu):=\Big{(}F_1(\Phi^{\mu_Z^{-1}(\mu)}(Z)),\ldots,F_N(\Phi^{\mu_Z^{-1}(\mu)}(Z))\Big{)}=\frac{d}{dt}\Phi^{\mu_Z^{-1}(\mu)}(Z),
\] 
is a solution of \eqref{Chap4psifinal} and satisfies
\begin{align*}
\min_{\mu\geq \mu_Z} V_Z^*(\mu) \geq \alpha- L D-{r}>0\quad \text{and}\quad ||V_Z^*||\le \max_{\mu\in[0,1]}F(\mu\mathbbm{1},\mu)+L D+{r}.
\end{align*}
Therefore, the matrix $\zeta_Z$ is a normalizing matrix. Furthermore, the two hypotheses $(H)$ and $(H_*)$ imply that 
\[
\int_{0}^{1}\frac{\partial_{N+1}{F}(\mu\mathbbm{1},\mu)}{{F}(\mu\mathbbm{1},\mu)}d\mu<0 \ \text{and}\ \int_{0}^{1}\frac{\partial_{N+1}{F}(\mu\mathbbm{1},\mu)}{{F}(\mu\mathbbm{1},\mu)}+\frac{\partial_j F(\mu\mathbbm{1},\mu)}{{F}(\mu\mathbbm{1},\mu)}d\mu=0.
\]
This implies that the stability hypothesis $(H_{stab})$ is satisfied.
\end{proof} 
\end{document}